\newcommand{\conv}{\mathrm{conv}}
\newcommand{\iso}{\mathrm{iso}}
\newcommand{\Ima}{\mathrm{Im}}
\newcommand{\dist}{\mbox {dist}}
\theoremstyle{plain}
\newtheorem{thm}{Theorem}[section]
\newtheorem{cor}[thm]{Corollary}
\newtheorem{prop}[thm]{Proposition}
\newtheorem{lem}[thm]{Lemma}
\newtheorem{quest}[thm]{Question}
\newtheorem{claim}{Claim}
\newenvironment{proofclaim}{\noindent{\em Proof of the claim.}}{\qedclaim}
\newcommand{\qedclaim}{\hfill $\diamond$ \medskip}
\title{Computing metric hulls in graphs}
\author{Kolja Knauer}
\address{Aix Marseille Univ, Universit\'e de Toulon, CNRS, LIS, Marseille, France}
\email{kolja.knauer@lis-lab.fr}
\author{Nicolas Nisse}
\address{Universit\'e C\^ote d'Azur, Inria, CNRS, I3S, France}
\email{nicolas.nisse@inria.fr}
\thanks{NN was supported by ANR program ``Investments for the Future'' under
		reference ANR-11-LABX-0031-01 and the associated Inria team AlDyNet. KK was supported by grants DISTANCIA: ANR-17-CE40-0015 and GATO: ANR-16-CE40-0009-01.}
\begin{document}

\begin{abstract}
We prove that, given a closure function the smallest preimage of a closed set can be calculated in polynomial time in the number of closed sets. 
This implies that there is a polynomial time algorithm to compute the convex hull-number of a graph, when all its convex subgraphs are given as input. We then show that computing if the smallest preimage of a closed set is logarithmic in the size of the ground set is LOGSNP-complete if only the ground set is given. A special instance of this problem is computing the dimension of a poset given its linear extension graph, that was conjectured to be in P.

The intent to show that the latter problem is LOGSNP-complete leads to several interesting questions and to the definition of the isometric hull, i.e., a smallest isometric subgraph containing a given set of vertices $S$. While for $|S|=2$ an isometric hull is just a shortest path, we show that computing the isometric hull of a set of vertices is NP-complete even if $|S|=3$. Finally, we consider the problem of computing the isometric hull-number of a graph and show that computing it is $\Sigma^P_2$ complete.
 \end{abstract}

\maketitle
 
\section{Introduction}
We study the complexity of several algorithmic problems arising from metric hulls in graphs. Let $G=(V,E)$ be a graph. A set $S \subseteq V$ is {\it convex} if, for any $u,v \in S$, any $(u,v)$-shortest path in $G$  is included in $S$. The {\it convex hull} $\conv(S)$ of a set $S \subseteq V$ is the smallest convex set containing $S$. A {\it (convex) hull set} of $G$ is a set $S \subseteq V$ such that $\conv(S)=V$. The {\it hull-number} $hn(G)$ of $G$ is the size of a minimum hull set of $G$. Let $S \subseteq V$ be a convex set of $G$, let $hn_G(S)$ denote the size of a minimum set $Z \subseteq V$ such that $S=\conv(Z)$. The hull-number was introduced in~\cite{Eve-85}, and since then has been the object of numerous papers, in particular because this may model contamination
spreading processes. Most of the results on the hull number are about computing good bounds for specific graph classes, see
e.g.~\cite{Cha-00,Her-05,Can-06,Cac-10,Dou-10,Cen-13}. 

While computing the convex hull of a set vertices can be done in polynomial time, computing the hull-number of a graph $G$ is known to be NP-complete~\cite{Dou-09} and remains so even if $G$ is a bipartite graph~\cite{Ara-13} or moreover a partial cube~\cite{Alb-16}, i.e., an isometric subgraph of a hypercube. It is well-known, that the function $\conv:2^V\to 2^V$ is a \emph{closure}, where generally a function $cl:2^A\to 2^A$, is called closure if it satisfies the following conditions for all sets $X,Y\subseteq A$
\begin{itemize}
 \item $X\subseteq {cl}(X)$ \hfill (\emph{extensive})
 \item $X\subseteq Y\implies {cl}(X)\subseteq {cl}(Y)$ \hfill (\emph{increasing})
 \item ${cl}({cl}(X))={cl}(X)$ \hfill (\emph{idempotent})
\end{itemize}
 
This leads to the following generalization of the hull-number of a graph, that we call Minimum Generator Set ({\bf \texttt{MGS}}):

\begin{quote}
Given a set $A$, a polytime computable closure $cl:2^A\to2^A$ and an integer $k$,
is there a set $X \subseteq A$ with $|X|\leq k$ such that $cl(X)=A$?
\end{quote}

Together with the results about the hull-number it follows that \texttt{MGS} is NP-complete. However, in~\cite{Alb-16} it was conjectured, that \texttt{MGS} is solvable in polynomial time if the set of images of $cl$ is part of the input. Our first result is to prove this conjecture under the weaker assumption of having a \emph{pseudo-closure}, which is what we call $f$ if it only satisfies $f(X \cup Y)=f(f(X) \cup f(Y))$ for any $X,Y \subseteq A$. More precisely, we devise an algorithm that finds minimum generating sets for all images of $f$ in polynomial time (in $|A|$, $|\Ima(f)|$ and the computation time of $f$)  (Theorem~\ref{thm:poly}). 
 Pseudo-closures embody a large class of objects. In particular, closures are essentially the same as lattices (see the discussion above Question~\ref{quest:graphiclattices}). Thus, deciding whether for an element $\ell$ in a lattice $L$ there are $k$ join-irreducibles, whose join is $\ell$, can be done in polynomial time. Lattices encode many combinatorial objects (for the cases below see~\cite{Her-94,She-96}). Thus, our results for instance yield polynomial time algorithms to decide whether:
\begin{itemize}
 \item a finite metric space has a hull-set of size $k$, if all convex sets are given as input,
 \item a (semi)group can be generated by $k$ elements, if all sub(semi)groups are given,
\end{itemize}
%
We then consider \texttt{MGS} with input (only) $A$ with respect to \emph{atomistic} closures, i.e.,  $\conv(\{x\})=\{x\}$ for all $x\in A$, and show that it is W[2]-complete (Corollary~\ref{cor:W2}) and its log-variant \texttt{LOGMGS} 
\begin{quote}
Given a set $A$, a polytime computable atomistic closure $cl:2^A\to2^A$ and an integer $k\leq \log(|A|)$,
is there a set $X \subseteq A$ with $|X|\leq k$ such that $cl(X)=A$?
\end{quote}
is LOGSNP-complete (Cor.~\ref{cor:LOGSNP})~\cite{Pap-96}.

Whether the corresponding results hold for the hull-number of a graph is open. In particular, in~\cite{Alb-16} it was conjectured that given a poset together with all its linear extensions, its dimension can be computed in polynomial time. Moreover, it was shown this problem is an instance of \texttt{LOGHULL-NUMBER} for partial cubes. While trying to establish a reduction to the hull-number problem for partial cubes, in order to show that \texttt{LOGHULL-NUMBER} for partial cubes is LOGSNP-complete, we arrive at the second metric hull-problem of concern in this paper, which to our knowledge is a novel problem: Let again $G=(V,E)$ be a graph. A set $S \subseteq V$ is {\it isometric} if, for any $u,v \in S$, a $(u,v)$-shortest path if $G$ is included in $S$. An {\it isometric hull} $\iso(S)$ of a set $S \subseteq V$ is a smallest isometric set containing $S$. Thus, this problem can be seen as a variation of the Steiner Tree problems. In particular, an isometric hull of two vertices is simply a shortest path. An {\it isometric hull set} of $G$ is a set $S \subseteq V$ such that $\iso(S)=V$. The {\it isometric hull-number} $ihn(G)$ of $G$ is the size of a minimum isometric hull set of $G$. In Section~\ref{sec:iso} we show that computing an isometric hull of a set of vertices $S$ is NP-complete even if $|S|=3$ (Theorem~\ref{theo:minIsomHull}) and that computing the isometric hull-number of a graph is $\Sigma^P_2$ complete (Theorem~\ref{theo:minHullSet}).

\section{Minimum generators of pseudo-closures}

Let $A$ be any set and $f: 2^A \rightarrow 2^A$ a \emph{pseudo-closure}. Note, that setting $Y=X$ in $f(X \cup Y) = f(f(X) \cup f(Y))$ one obtains, that $f(X)=f(f(X))$ for all $X\subseteq A$, i.e., pseudo-closures are idempotent. In particular, pseudo-closures generalize closures in a different way than \emph{preclosures}, which are not required to be idempotent. 
Finally, $f$ is said {\it size-increasing} if $X\subseteq Y \implies |f(X)|<|f(Y)|$ or $f(X)=f(Y)$ for all $X,Y\subseteq A$.
Let us first argue that in a way pseudo-closures are closures without the property of being extensive.

\begin{lem}\label{lem:closure}
 Let $f: 2^A \rightarrow 2^A$. 
  The following are equivalent:
 \begin{itemize}
  \item[(i)] $f$ is a closure,
  \item[(ii)] $f$ is an extensive pseudo-closure, 
  \item[(iii)] $f$ is an extensive and size-increasing pseudo-closure.
 \end{itemize}
\end{lem}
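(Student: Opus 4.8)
The plan is to prove the three equivalences by showing $(i)\Rightarrow(iii)\Rightarrow(ii)\Rightarrow(i)$, since this cycle seems to require the least work: $(iii)\Rightarrow(ii)$ is trivial (dropping a hypothesis), so the real content is in $(i)\Rightarrow(iii)$ and $(ii)\Rightarrow(i)$.

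For $(i)\Rightarrow(iii)$: assume $f$ is a closure. Extensivity is given. For the pseudo-closure identity $f(X\cup Y)=f(f(X)\cup f(Y))$, I would argue both inclusions. For ``$\subseteq$'': since $X\subseteq f(X)$ and $Y\subseteq f(Y)$ by extensivity, we get $X\cup Y\subseteq f(X)\cup f(Y)$, so by the increasing property $f(X\cup Y)\subseteq f(f(X)\cup f(Y))$. For ``$\supseteq$'': from $X\subseteq X\cup Y$ and $Y\subseteq X\cup Y$, increasingness gives $f(X)\subseteq f(X\cup Y)$ and $f(Y)\subseteq f(X\cup Y)$, hence $f(X)\cup f(Y)\subseteq f(X\cup Y)$; applying $f$ and using increasingness and then idempotence, $f(f(X)\cup f(Y))\subseteq f(f(X\cup Y))=f(X\cup Y)$. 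Finally for size-increasingness, suppose $X\subseteq Y$. Then by the increasing property $f(X)\subseteq f(Y)$, so either $f(X)=f(Y)$ or the inclusion is strict, and a strict inclusion of sets forces $|f(X)|<|f(Y)|$; this gives exactly the size-increasing condition.

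For $(ii)\Rightarrow(i)$: assume $f$ is an extensive pseudo-closure. Idempotence is already noted in the text (set $Y=X$ in the defining identity and use extensivity-free algebra: $f(X)=f(X\cup X)=f(f(X)\cup f(X))=f(f(X))$), so I can cite that. Extensivity is assumed. The remaining point is the increasing property: suppose $X\subseteq Y$. Then $Y=X\cup Y$, so $f(Y)=f(X\cup Y)=f(f(X)\cup f(Y))$. Now $f(X)\subseteq f(X)\cup f(Y)$, and applying $f$ with the increasing property would be circular, so instead I use extensivity on the set $f(X)\cup f(Y)$ together with the pseudo-closure identity the other way: $f(f(X)\cup f(Y)) = f(f(f(X))\cup f(f(Y))) = f(f(X)\cup f(Y))$ doesn't help directly. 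The cleaner route: by extensivity $f(X)\subseteq f(f(X))=f(X)$ trivially, but note $f(X)\subseteq f(X)\cup f(Y)\subseteq f(f(X)\cup f(Y))=f(Y)$, where the middle inclusion is extensivity applied to the set $f(X)\cup f(Y)$. Hence $f(X)\subseteq f(Y)$, which is the increasing property. So $f$ is a closure. The main obstacle is getting the direction of implications right so that one never invokes the increasing property before it has been established; routing through extensivity applied to the union $f(X)\cup f(Y)$ is the key trick that avoids circularity, and I would double-check that step carefully.
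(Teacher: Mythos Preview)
Your proof is correct and follows essentially the same cycle $(i)\Rightarrow(iii)\Rightarrow(ii)\Rightarrow(i)$ as the paper, with the same key step in $(ii)\Rightarrow(i)$: using extensivity on $f(X)\cup f(Y)$ together with the pseudo-closure identity to get $f(X)\subseteq f(X)\cup f(Y)\subseteq f(f(X)\cup f(Y))=f(X\cup Y)=f(Y)$. The only difference is presentational: the paper states the chain directly, while your write-up includes a couple of false starts before arriving at the same argument.
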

\begin{proof}
(i)$\implies$(iii): Let $f$ be a closure, then it is extensive and increasing by definition and clearly also size-increasing. Since $f(X) \subseteq f(X \cup Y)$ and $f(Y) \subseteq f(X \cup Y)$, then $f(X) \cup f(Y) \subseteq f(X \cup Y)$. Hence, $f(f(X) \cup f(Y)) \subseteq f(f(X \cup Y))=f(X \cup Y)$. 
Moreover, $X \subseteq f(X)$ and $Y \subseteq f(Y)$, therefore, $f(X \cup Y) \subseteq f(f(X) \cup f(Y))$. Therefore $f$ is an extensive and size-increasing pseudo-closure. 

(iii)$\implies$(ii): trivial.

(ii)$\implies$(i): Let $f$ be an extensive pseudo-closure. As argued above $f$ is idempotent. It remains to prove that $f$ is increasing. Let $X\subseteq Y$. We have $$f(X)\subseteq f(X)\cup f(Y)\subseteq f(f(X) \cup f(Y))=f(X \cup Y)=f(Y),$$ where the second inclusion uses that $f$ is extensive and the first equality uses that $f$ is a pseudo-closure.

\end{proof}


Just to give an example of a pseudo-closure, that is not a closure, consider:
\begin{prop}\label{prop:xmpl}
 Let $cl: 2^A \rightarrow 2^A$ be a closure and $\emptyset\neq X'\subseteq X\subseteq A$. Then $f(Y):=cl(Y\cup X)\setminus X'$ is an increasing pseudo-closure that is not extensive.
\end{prop}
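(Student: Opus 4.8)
The plan is to check the three assertions about $f$ one at a time: that $f$ fails to be extensive, that $f$ is increasing, and that $f$ satisfies the pseudo-closure identity $f(Y_1 \cup Y_2) = f(f(Y_1) \cup f(Y_2))$ for all $Y_1, Y_2 \subseteq A$.

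Non-extensiveness and the increasing property are immediate. For the former, since $X' \neq \emptyset$ we may fix $x \in X'$ and take $Y = \{x\}$; then $x \in Y$ but $x \notin cl(Y \cup X) \setminus X' = f(Y)$, so $Y \not\subseteq f(Y)$. For the latter, if $Y_1 \subseteq Y_2$ then $Y_1 \cup X \subseteq Y_2 \cup X$, whence $cl(Y_1 \cup X) \subseteq cl(Y_2 \cup X)$ because $cl$ is increasing, and deleting $X'$ from both sides preserves the inclusion, giving $f(Y_1) \subseteq f(Y_2)$.

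The substance of the proof is the pseudo-closure identity, and the one ingredient that makes it work is the elementary observation that $(S \setminus X') \cup X = S$ whenever $X' \subseteq X \subseteq S$. Applying this with $S = cl(Y_i \cup X)$ --- which contains $X$ since $cl$ is extensive --- gives $f(Y_i) \cup X = cl(Y_i \cup X)$ for $i = 1, 2$, and hence
\[ f(Y_1) \cup f(Y_2) \cup X = cl(Y_1 \cup X) \cup cl(Y_2 \cup X). \]
Consequently $f(f(Y_1) \cup f(Y_2)) = cl\bigl(cl(Y_1 \cup X) \cup cl(Y_2 \cup X)\bigr) \setminus X'$. Now $cl$ is in particular a pseudo-closure by Lemma~\ref{lem:closure}, so $cl\bigl(cl(Y_1 \cup X) \cup cl(Y_2 \cup X)\bigr) = cl\bigl((Y_1 \cup X) \cup (Y_2 \cup X)\bigr) = cl(Y_1 \cup Y_2 \cup X)$; deleting $X'$ from this last set gives precisely $f(Y_1 \cup Y_2)$, as desired.

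I do not expect a genuine obstacle here. The only point that needs care is to first restore $X$ to $f(Y_1) \cup f(Y_2)$ before invoking the pseudo-closure identity of $cl$ --- this is exactly what the set-theoretic observation above arranges, and without it the removed part $X'$ would block the reduction to $cl$. Everything else is bookkeeping.
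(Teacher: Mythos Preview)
Your proof is correct and follows essentially the same route as the paper: both arguments reduce $f(f(Y_1)\cup f(Y_2))$ to $cl\bigl(cl(Y_1\cup X)\cup cl(Y_2\cup X)\bigr)\setminus X'$ using $X'\subseteq X$ and extensiveness of $cl$, then invoke Lemma~\ref{lem:closure} to collapse the nested closure. Your isolation of the identity $(S\setminus X')\cup X=S$ for $X'\subseteq X\subseteq S$ makes the key step slightly more transparent, but the content is the same.
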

\begin{proof}
 To see that $f$ is a pseudo-closure, we transform $f(f(Y)\cup f(Z))=cl(cl(Y\cup X)\setminus X'\cup cl(Z\cup X)\setminus X'\cup X)\setminus X'$ which by $X'\subseteq X$ equals $cl(cl(Y\cup X)\cup cl(Z\cup X)\cup X)\setminus X'$ which since $cl$ is extensive equals $cl(cl(Y\cup X)\cup cl(Z\cup X))\setminus X'$. Now, since by Lemma~\ref{lem:closure} $cl$ is a pseudo-closure, we can transform to  $cl((Y\cup X)\cup (Z\cup X))\setminus X'$ which equals $cl(Y\cup Z \cup X)\setminus X'=f(Y\cup Z)$.
 
 It is easy to see, that $f$ is increasing and since $X'\not\subseteq f(X')$ it is not extensive.
\end{proof}


We now turn our attention to the problem of generating images of a pseudo-closure.
A set $X \subseteq A$ {\it generates} $f(X)$, and $X$ is {\it minimum} (for $f$)  if there is no set $Y \subseteq A$ such that $|Y|<|X|$ and $f(X)=f(Y)$. 

\subsection{Generating with large input}
In this section, we design a dynamic programming algorithm that computes a minimum generator of any $H \in \Ima(f)=\{Y \subseteq A\mid \exists X \subseteq A, Y=f(X)\}$. {We assume that, for any $X' = X \cup \{w\} \subseteq A$ and  given $f(X)$,  determining $f(X')$ can be done in time $c_f$}. A similar algorithm has been published previously in a different language and restricted to closure functions~\cite{Kam-05}. Moreover, the approach in~\cite{Kam-05} is incremental which leads to a time-complexity of $O(c_f|A||\Ima(f)|^2)$ (while no runtime analysis is presented there). We include our algorithm here to be self-contained but also because the complexity $O(c_f|A||\Ima(f)|)$ of our algorithm is slightly better and we think that our presentation might be more accessible to our community.

Let us describe the algorithm informally. Every set $S \in \Ima(f)$ is assigned to one of its generators stored in the variable $label(S)$. Initially, $label(S)$ may be any generator of $S$ (for instance, $S$ itself). The algorithm considers the sets in $\Ima(f)$ in non decreasing order of their size and aims at refining their labels. More precisely, from a set $Y \in \Ima(f)$ with generator $label(Y)$, the algorithm considers every set $f(R)$ generated by $R=label(Y) \cup \{z\}$ for some $z \in A$. If $R$ is smaller than $label(f(R))$ then $R$ becomes the new label of $f(R)$. 

\begin{algorithm}[ht]
  \caption{$MinGen(A,f)$.}\label{alg:2}

  \begin{algorithmic}[1]
    \REQUIRE{A set $A$, a pseudo-closure  $f: 2^A \rightarrow 2^A$, and the set $\Ima(f)$.}
    \STATE For any $H \in \Ima(f)\setminus\{f(\emptyset)\}$, set $label(H) \leftarrow H$ and set $label(f(\emptyset)) \leftarrow \emptyset$
    \STATE Set $Continue \leftarrow True$
    \WHILE{$Continue$}
        \STATE Set $Continue \leftarrow False$
    \FOR{$i=1$ to $|A|$}
    \FOR{$Y \in \Ima(f)$, $|Y|=i$}
    \FOR{$z \in A \setminus label(Y)$}
    \STATE Set $R \leftarrow \{z\} \cup label(Y)$
    \STATE Set $H \leftarrow label(f(R))$

    \IF{$|R| < |H|$} \STATE { $label(f(R)) \leftarrow  R$ and $Continue \leftarrow True$}
    \ENDIF
            \ENDFOR
        \ENDFOR
    \ENDFOR
    \ENDWHILE
    \RETURN $\{label(Y) \mid Y \in \Ima(f)\}$
  \end{algorithmic}
\end{algorithm}


\begin{thm}\label{thm:poly}
Algorithm $MinGen(A,f)$ computes a minimum generator of any $H \in \Ima(f)$ in time $O(c_f(|A| |\Ima(f)|)^2)$. 

Moreover, if $f$ is size-increasing, its time-complexity is $O(c_f|A||\Ima(f)|)$.
\end{thm}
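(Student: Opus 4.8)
The plan is to analyze Algorithm~\ref{alg:2} in two parts: first establish correctness (every label stabilizes to a minimum generator), then bound the running time, with a refined bound in the size-increasing case.

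\emph{Correctness.} I would argue that when the algorithm terminates, $label(H)$ is a minimum generator of $H$ for every $H\in\Ima(f)$. The key structural observation is that any generator can be built up one element at a time while staying inside $\Ima(f)$: if $X=\{x_1,\dots,x_m\}$ is a minimum generator of $H$, consider the chain of images $f(\{x_1\}), f(\{x_1,x_2\}),\dots,f(X)=H$. Using the pseudo-closure identity $f(X'\cup\{x\})=f(f(X')\cup f(\{x\}))$ one sees each step is a legal move of the algorithm: from a set $Y=f(X')$ in $\Ima(f)$ with some label $L=label(Y)$, adding an element $z$ produces $R=L\cup\{z\}$ with $f(R)=f(f(L)\cup f(\{z\}))=f(f(Y)\cup f(\{z\}))$, and since $L$ generates $Y$ this equals $f(Y\cup\{z\})$; so the algorithm does explore the image reachable from $Y$ by adding $z$. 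I would then prove by induction on $m$ that after sufficiently many passes of the \textbf{while} loop, $label(f(\{x_1,\dots,x_j\}))$ has size at most $j$ for each $j$; the outer \textbf{for} $i=1$ to $|A|$ ensures smaller images are processed before larger ones within a pass, so one pass propagates the improvement at least one more step along such a chain. Since labels only ever decrease in size and are bounded below, the process halts, and at halting no improving move exists, which combined with the chain argument forces $|label(H)|\le |X| = hn_f(H)$; minimality of $X$ gives equality.

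\emph{Running time, general case.} Each iteration of the innermost body costs $O(c_f)$: given $label(Y)$ and $f(label(Y))=Y$ we compute $f(R)$ for $R=label(Y)\cup\{z\}$ in time $c_f$ by assumption, then a constant-time size comparison. One full pass of the \textbf{while} loop does $\sum_{i}\sum_{|Y|=i}|A| = O(|A|\,|\Ima(f)|)$ such iterations, i.e. $O(c_f|A|\,|\Ima(f)|)$ time. The number of passes is bounded by the total number of label improvements plus one (a pass with no improvement sets $Continue\leftarrow False$). Each image's label size can strictly decrease at most $|A|$ times, so there are at most $O(|A|\,|\Ima(f)|)$ improvements overall, hence at most $O(|A|\,|\Ima(f)|)$ passes. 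Multiplying gives the stated $O(c_f(|A|\,|\Ima(f)|)^2)$.

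\emph{Running time, size-increasing case.} Here I would show a single pass suffices, so the \textbf{while} loop runs at most twice and the bound drops to $O(c_f|A|\,|\Ima(f)|)$. The point is that when $f$ is size-increasing, a minimum generator $X$ of $H$ can be ordered $x_1,\dots,x_m$ so that the chain $f(\{x_1\})\subsetneq$-in-image $\cdots$, more precisely so that $|f(\{x_1,\dots,x_j\})|$ is non-decreasing in $j$ and these intermediate images are themselves generated by their length-$j$ prefixes. Then the single increasing pass over sizes $i=1,\dots,|A|$ walks along this chain in order: when it reaches $Y=f(\{x_1,\dots,x_{j-1}\})$ its label already has size $\le j-1$ (handled at an earlier or equal value of $i$), and adding $x_j$ installs a label of size $\le j$ on $f(\{x_1,\dots,x_j\})$, which has strictly larger size unless it already equals $Y$ — so it is processed strictly later in the same pass. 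Hence after one pass every $label(H)$ is already minimum and the second pass makes no change. The main obstacle I anticipate is making the chain/ordering argument fully rigorous in the size-increasing case: one must be careful that the ordering of a minimum generator can always be chosen so that partial images have non-decreasing size \emph{and} are generated by the corresponding prefixes, and that "processed earlier in the pass" is genuinely guaranteed by the $i$-loop; the pseudo-closure identity is exactly what keeps the prefixes valid generators, but the size monotonicity needs the size-increasing hypothesis together with an exchange-type argument on the generator.
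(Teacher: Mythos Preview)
Your approach is essentially the paper's: both arguments use the pseudo-closure identity $f(X'\cup\{z\})=f(f(X')\cup f(\{z\}))$ to peel off one element of a minimum generator at a time, and the paper's minimal-counterexample proof is your chain induction rephrased. Two small points deserve attention, however.

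First, you do not verify that the element $z=x_j$ you want to add satisfies $z\notin label(Y)$ (line~7 of the algorithm only iterates over such $z$). The paper checks this explicitly, and indeed it follows from minimality of $X$: if $x_j\in label(Y)$ then $f(label(Y))=f(label(Y)\cup\{x_j\})$, so $f(\{x_1,\dots,x_{j-1}\})=f(\{x_1,\dots,x_j\})$, whence $X\setminus\{x_j\}$ already generates $H$ via the pseudo-closure identity, a contradiction. Second, in the general (not size-increasing) case your sentence ``the outer \textbf{for} $i=1$ to $|A|$ ensures smaller images are processed before larger ones within a pass, so one pass propagates the improvement at least one more step'' is not justified: for an arbitrary pseudo-closure the chain $f(\{x_1\}),f(\{x_1,x_2\}),\dots$ need not be non-decreasing in size. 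The correct statement (and what the paper uses) is that each label improvement sets $Continue\leftarrow True$ and hence triggers another full \textbf{while} pass, and in that next pass the relevant $Y$ is visited regardless of its size.

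Finally, the ``obstacle'' you anticipate in the size-increasing case is not one. For a minimum generator $X$, every prefix $\{x_1,\dots,x_j\}$ is itself a minimum generator of its image (otherwise a smaller generator of that image, together with $\{x_{j+1},\dots,x_m\}$, would generate $H$ by the pseudo-closure identity, contradicting minimality of $X$); size-increasing then forces the image sizes along the chain to be strictly increasing, so the $i$-loop in the first pass does visit them in order. No exchange argument is needed. The paper circumvents even this discussion by taking a minimal counterexample in $|Y|$ rather than tracking an explicit chain.
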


\begin{proof}
Let us first show that, at the end of the execution of the algorithm, $label(Y)$ is a minimum generator for every $Y \in \Ima(f)$. 

Clearly, $label(Y)$ is initially  a generator of $Y$ (Line $1$). Moreover, $label(Y)$ can only be modified when it is replaced by $R$ such that $f(R)=Y$ (Line 11). Let us show that $label(Y)$ is minimum. 
 
For purpose of contradiction, let $Y \in \Ima(f)$ such that the value $L$ of $label(Y)$ at the end of the algorithm is not a minimum generator of $Y$. Moreover, let us consider such a counter example such that the size of a minimum generator is minimum. Hence, there is $Z \subseteq A$ with $|Z|<|L|$ and $f(Z)=f(L)=Y$ and $Z$ is a minimum generator for $Y$. By line 1, we know that $Z\neq \emptyset$. Hence, let $w \in Z$ and $X=f(Z \setminus \{w\})$.  Any minimum generator of $X$ has size at most $|Z|-1$. 
 Therefore, by minimality of the size of a minimum generator of our counter-example, $label(X)$ is a minimum generator of $X$. In particular, $f(label(X))=X=f(Z \setminus \{w\})$.

First, let us show that $w \notin label(X)$. Indeed, otherwise, $X=f(label(X))=f(label(X) \cup \{w\})=f(f(label(X)) \cup f(w))=f(f(Z\setminus \{w\}) \cup f(w))=f(Z)=Y$. Therefore, $f(Z\setminus \{w\}) = X = Y$, contradicting the fact that $Z$ is a minimum generator for $Y$.

Consider the step when $label(X)$ receives its final value. After this step, $Continue$ must equal $True$. Therefore, there is another iteration of the $While$-loop. During this next iteration, there must be an iteration of the $For$-loop (Line 6) that considers $X \in \Ima(f)$ and an iteration of the $For$-loop (Line 7) that consider $w \notin label(X)$. At this iteration, we set $H=f(label(X) \cup \{w\})= f(f(label(X)) \cup f(w))=f(f(Z\setminus \{w\}) \cup f(w))=f(Z)=Y$. Because the size of the set $label(Y)$ is non increasing during the execution, the value $L'$ of $label(Y)$ at this step is such that $|L|\leq |L'|$. In particular, $|label(X) \cup \{w\}| \leq |Z| <|L| \leq |L'|$. Therefore, during this execution (Line 11), $label(Y)$ should become equal to $label(X) \cup \{w\}$. Since, again, the size of the set $label(Y)$ is non increasing, it contradicts the fact that $label(Y)=L$ at the end of the algorithm. 


First note, that since $f$ is idempotent and $H\in \Ima(f)$ in Line 1 we can set $label(H) \leftarrow H$, i.e., this can be done in constant time.
Each iteration of the $While$-loop takes time $O(c_f|A||\Ima(f)|)$. Moreover, each new iteration of this loop comes after a modification of some label in the previous iteration (Line 11, because $Continue$ is set to $True$). Since there are $|\Ima(f)|$ labels and each of them will receive at most $|A|$ values (because the size of a label is not increasing), the time-complexity of the algorithm is $O(c_f(|A| |\Ima(f)|)^2)$.


In case when $f$ is size-increasing, we prove that each label contains its final value after the first iteration of the $While$-loop. So, there is exactly $2$ iterations of this loop in that case and the time-complexity is $O(c_f|A||\Ima(f)|)$ when $f$ is size-increasing.

More precisely, we show that the label of $Y \in \Ima(f)$ contains its final value just before $Y$ is considered in the $For$-loop (line 6) of the first iteration. The proof is similar to the one of the correctness of the algorithm. 

For purpose of contradiction, let $Y \in \Ima(f)$ such that the value $L$ of $label(Y)$ just before $Y$ is considered in the $For$-loop (line 6) of the first iteration is not a minimum generator of $Y$. Moreover, let us consider such a counter example such that $|Y|$ is minimum. Hence, there is $Z \subseteq A$ with $|Z|<|L|$ and $f(Z)=f(L)=Y$. Let $w \in Z$ and $X=f(Z \setminus \{w\})$.  Any minimum generator of $X$ has size at most $|Z|-1$. Moreover, because $f$ is size-increasing, $|X|=|f(Z\setminus \{w\})| < |f(Z)|=|Y|$ (because $X \neq Y$ since their minimum generators have different sizes). Therefore, by minimality of the counter-example, $label(X)$ is a minimum generator of $X$ just before $X$ is considered in the $For$-loop of the first iteration, and moreover, $X$ is considered before $Y$. In particular, $f(label(X))=X=f(Z \setminus \{w\})$.

Similarly as before, $w \notin label(X)$. Hence, during the iteration (of the $For$-loops) that considers $X$ and $w$, either $label(Y)$ must become $label(X) \cup \{w\}$ or $|label(Y)|\leq |label(X) \cup \{w\}|\leq |Z|$. In both cases, it is a contradiction since $X$ is considered before $Y$ and $ |label(X) \cup \{w\}| < |L|$.
\end{proof}


%
%

From Th.~\ref{thm:poly}, Lemma~\ref{lem:closure}, and the preceding discussion we immediately get:

\begin{cor}\label{cor:closure}
 Let $cl:2^A\to 2^A$ be a closure. \texttt{MGS} can be solved in $O(c_{cl}|A||\Ima(cl)|)$ time.
\end{cor}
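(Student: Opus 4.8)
The plan is to instantiate Theorem~\ref{thm:poly} with $f=cl$ and read the answer to \texttt{MGS} off the output of $MinGen(A,cl)$, where (as in the conjecture from~\cite{Alb-16}, and as the running time $O(c_{cl}|A||\Ima(cl)|)$ already indicates) we assume that $\Ima(cl)$ is supplied together with $A$, $cl$ and $k$.

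First I would invoke Lemma~\ref{lem:closure}: a closure is exactly an extensive and size-increasing pseudo-closure, so $cl$ is in particular a size-increasing pseudo-closure, which is precisely the hypothesis needed for the faster bound in Theorem~\ref{thm:poly}. Next I would use extensivity to observe $A\subseteq cl(A)$, hence $cl(A)=A$, so that $A\in\Ima(cl)$; this is the one place where it matters that a closure, unlike a general pseudo-closure, is extensive, so that the ground set itself occurs among the images and $MinGen$ actually produces a generator of $A$. Running $MinGen(A,cl)$ then costs $O(c_{cl}|A||\Ima(cl)|)$ time and returns, among the sets $label(Y)$ for $Y\in\Ima(cl)$, a minimum generator $X:=label(A)$ of $A$, i.e., a smallest $X\subseteq A$ with $cl(X)=A$.

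Finally I would answer \texttt{MGS} by comparing $|X|$ with $k$: by minimality of $X$, some $X'\subseteq A$ with $|X'|\le k$ and $cl(X')=A$ exists iff $|X|\le k$. This test takes $O(|A|)$ time, dominated by the cost of $MinGen$, so the total running time is $O(c_{cl}|A||\Ima(cl)|)$ as claimed. I expect no real obstacle here beyond the bookkeeping already done in the proof of Theorem~\ref{thm:poly}; the only subtlety worth stating explicitly is $A\in\Ima(cl)$, which is exactly where the extensivity of a closure is used.
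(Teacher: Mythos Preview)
Your proposal is correct and follows precisely the paper's approach: the corollary is stated as an immediate consequence of Theorem~\ref{thm:poly} together with Lemma~\ref{lem:closure}, and you have unpacked exactly those two ingredients. Your explicit remark that $A\in\Ima(cl)$ by extensivity is a small but worthwhile addition that the paper leaves implicit.
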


\noindent This confirms a conjecture of~\cite{Alb-16} (which could have probably also been extracted from~\cite{Kam-05}) and slightly improves the time-complexity of~\cite{Kam-05} in the case of a closure. Furthermore, it is well-known and easy to see that for a closure $cl(X)=\bigcap_{X\subseteq Y\in\Ima(cl)}Y$. Thus, $c_{cl}$ is in $O(|\Ima(cl)|)$ yielding a uniform bound of $O(|A||\Ima(cl)|^2)$.



\subsection{Generating with small input}


In this section we show that for an atomistic closure $cl:2^A\to 2^A$, the problem \texttt{MGS} is W[2]-complete with respect to the size of the solution, when only $A$ is the input. Furthermore, \texttt{LOGMGS} is LOGSNP-complete. We then introduce the problem \texttt{COORDINATE REVERSAL}, show an equivalence with \texttt{HITTING SET}, and finally relate it to the hull-number problem in partial cubes.

%

%
%
For us, the optimization problem \texttt{DOMINATING SET} consists in given a directed graph $D=(V,A)$ to find the minimum $k$, such that there is $X\subseteq V$ with $|X|=k$ and every $v\in V\setminus X$ has an incoming arc from a vertex in $X$.

\begin{prop}\label{prop:eqdom}
 \texttt{MGS} and \texttt{DOMINATING SET} are L-equivalent, i.e., there are polynomial reductions both ways that preserve optimal solutions.
\end{prop}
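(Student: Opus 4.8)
The plan is to give, in each direction, a polynomial-time reduction that carries optimal solutions to optimal solutions. Throughout I use the fact recalled above that a closure satisfies $cl(X)=\bigcap_{X\subseteq Y\in\Ima(cl)}Y$; hence $cl(X)=A$ holds if and only if $X$ is contained in no proper closed set, equivalently $X$ meets $A\setminus M$ for every maximal proper closed set ("coatom") $M$. Consequently the minimum size of a generator of $A$ equals the minimum size of a set hitting the family $\mathcal F=\{A\setminus M : M\text{ a coatom of }cl\}$.

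For \texttt{DOMINATING SET} $\to$ \texttt{MGS}: given a digraph $D=(V,E)$, I would put $A:=V$ and define $cl_D(X):=V$ if $X$ is a dominating set of $D$ and $cl_D(X):=X$ otherwise. That $cl_D$ is a closure is routine: it is extensive (immediate), increasing (a superset of a dominating set is again dominating), and idempotent ($V$ dominates $D$, and every non-dominating set is fixed); it is also polynomial-time computable. We may assume $D$ has no dominating vertex (otherwise the optimum is $1$ and the reduction is trivial), so $cl_D(\{v\})=\{v\}$ for every $v$ and $cl_D$ is atomistic. Since $cl_D(X)=A$ exactly when $X$ dominates $D$, the minimum generators of $A$ are precisely the minimum dominating sets of $D$, and the construction is log-space; the atomisticity of $cl_D$ is what makes this direction suffice for the hardness part of Corollary~\ref{cor:W2}.

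For \texttt{MGS} $\to$ \texttt{DOMINATING SET}: I would feed the hitting-set reformulation of the first paragraph into the classical optimum-preserving equivalence between \texttt{HITTING SET} and \texttt{DOMINATING SET} on digraphs — one vertex for each element of $A$ and one for each member of $\mathcal F$, an arc from an element to each member containing it, and all arcs among the element-vertices; the minimum dominating sets using only element-vertices are then exactly the minimum hitting sets of $\mathcal F$, i.e.\ the minimum generators of $A$. The delicate point, which I expect to carry the real weight of the argument, is that this is polynomial only when $\mathcal F$ has polynomial size, and that can fail even for a polynomial-time closure: for the closure associated with the partition lattice the ground set has size $\binom n2$ whereas there are $2^{n-1}-1$ coatoms. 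Handling this requires either showing that a polynomially-sized subfamily of closed sets already forces the optimum, or replacing the flat gadget above by a layered digraph simulating the iterated one-step closure, designed so that its domination number provably equals the minimum generator size of $A$ without the reduction ever evaluating that quantity.
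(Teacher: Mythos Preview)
Your reduction from \texttt{DOMINATING SET} to \texttt{MGS} is correct but takes a different route from the paper. You use the two-valued closure $cl_D(X)=V$ if $X$ dominates and $cl_D(X)=X$ otherwise; the paper instead passes through the known L-equivalence of \texttt{DOMINATING SET} and \texttt{HITTING SET} and then builds the closure whose closed sets are all intersections of the complements of closed in-neighborhoods $\{V\setminus\overline{N}^-(v):v\in V\}$, so that its coatoms are precisely those complements. Both constructions yield an atomistic polytime closure whose minimum generator size equals the domination number. Yours is more direct and needs only the harmless assumption that no single vertex dominates; the paper's needs the (also harmless, via the hitting-set detour) separation assumption that in-neighborhoods distinguish vertices, but in return produces a closure whose lattice of closed sets actually reflects $D$ rather than being flat.

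For the converse direction you have correctly identified a real obstacle: the family $\mathcal F$ of coatom-complements of a polytime closure can have exponential size (your partition-lattice example is apt), so the ``hit the coatoms, then encode hitting-set as dominating-set'' reduction is not polynomial in general, and your suggested repairs remain sketches. That is a genuine gap in your argument. It is worth noting, though, that the paper's own proof does not supply this direction either: it only constructs the closure from the digraph and then remarks that dominating sets and generators correspond (the phrase ``this construction is clearly reversible'' refers to that bijection, not to a reduction from an arbitrary \texttt{MGS} instance back to \texttt{DOMINATING SET}). So you are not missing an idea present in the paper; you have located a point the paper glosses over, and one that matters for the \emph{membership} halves of the completeness corollaries that follow.
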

\begin{proof}
 Given a directed graph $D=(V,A)$ we construct an atomistic closure $cl$ such that a minimum dominating set of $D$ is of the same size as a minimum generating set of $cl$. 
 First, we can assume that the collection of closed in-neighborhoods $\{\overline{N}^-(v)\mid v\in V\}$ is a set without inclusions, since dominating set is L-equivalent to hitting set on $(V,\{\overline{N}^-(v)\mid v\in V\})$. 
 For the same reason, we may assume that for any two vertices $u,v\in V$ there is a set in closed in-neighborhood containing $u$ but not $v$. We define $cl:2^V \rightarrow 2^V$ by setting the closed sets to be all possible intersections of the complements of closed in-neighborhoods $\{V\setminus\overline{N}^-(v)\mid v\in V\}$, i.e., $cl(X)$ is the smallest set of vertices containing $X$ which can be expressed as intersection of $\{V\setminus\overline{N}^-(v)\mid v\in V\}$. It is easy to check that $cl$ is a closure and since for any two vertices $u,v\in V$ there is a set in closed in-neighborhood containing $u$ but not $v$, we have $cl(\{v\})=\{v\}$ for all $v\in V$, i.e., $cl$ is atomistic. Clearly, $cl(X)$ can be computed in polynomial time for a given $X\subseteq V$.
 
 Now, a dominating set $X\subseteq V$ corresponds to a hitting set of $\{\overline{N}^-(v)\mid v\in V\}$ which in turn is a set of elements not contained in any of the maximal proper closed sets of $cl$, i.e., $cl(X)=V$. This construction is clearly reversible, so we have proved the claim.
\end{proof}

While it was known before that determining the hull-number of graphs, bipartite graphs, and even partial cubes is NP-complete, the known reductions reduce variants of 3-SAT to the decision version of hull-number. Here, we have shown that two decision versions of combinatorial optimization problems are equivalent in the stronger sense of L-reductions, i.e., sizes of solutions are preserved. This has some immediate consequences. 

Since \texttt{\texttt{DOMINATING SET}} is W[2]-complete, Proposition~\ref{prop:eqdom} gives:
\begin{cor}\label{cor:W2}
 \texttt{MGS} is W[2]-complete.
\end{cor}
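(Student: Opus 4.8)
The plan is to derive Corollary~\ref{cor:W2} directly from Proposition~\ref{prop:eqdom} together with the known fact that \texttt{DOMINATING SET} is W[2]-complete. Since Proposition~\ref{prop:eqdom} establishes L-equivalence between \texttt{MGS} and \texttt{DOMINATING SET}, and L-reductions are in particular parameterized reductions that preserve the value of the optimum (hence preserve the parameter), membership and hardness both transfer. More precisely, first I would observe that the reduction from \texttt{DOMINATING SET} to \texttt{MGS} in Proposition~\ref{prop:eqdom} sends an instance $(D,k)$ to an instance $(V, cl, k)$ with the same $k$, so it is a valid fpt-reduction witnessing W[2]-hardness of \texttt{MGS}. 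Then, for membership in W[2], I would invoke the reverse reduction of Proposition~\ref{prop:eqdom} (the construction is stated to be ``clearly reversible''): given an instance $(A, cl, k)$ of \texttt{MGS} with $cl$ atomistic, one recovers in polynomial time an equivalent \texttt{DOMINATING SET} instance with the same parameter $k$, and since \texttt{DOMINATING SET} is in W[2], so is \texttt{MGS}.

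One subtlety worth spelling out is that the W[2]-completeness statement for \texttt{MGS} should be understood in the restricted-input regime discussed at the start of the subsection, namely where only $A$ together with a polytime oracle for the atomistic closure $cl$ is given (not the full list $\Ima(cl)$, which by Corollary~\ref{cor:closure} would make the problem polynomial). I would make this explicit so there is no tension with the earlier positive result. Under this convention, the closure $cl$ produced in the forward direction of Proposition~\ref{prop:eqdom} is indeed polytime computable from $D$, so the reduction is an honest fpt-reduction and no hidden exponential blow-up occurs.

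The reverse direction requires a touch of care, since an arbitrary atomistic polytime closure need not literally be of the ``intersection of complements of closed in-neighborhoods'' form. Here I would note that the relevant structure is: the maximal proper closed sets of $cl$ play the role of the sets $V \setminus \overline{N}^-(v)$, and a generating set is exactly a set of elements hitting every maximal proper closed set; equivalently, taking complements, a generating set is a hitting set of the hypergraph whose edges are the complements of maximal proper closed sets. This hypergraph can be computed from the oracle in time polynomial in $|A|$ and $|\Ima(cl)|$ — but the number of maximal proper closed sets is at most $|A|$ (each is determined, up to the construction in the proof, by the element it excludes), so the \texttt{HITTING SET} / \texttt{DOMINATING SET} instance has polynomial size. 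Since \texttt{HITTING SET} is W[2]-complete and L-equivalent to \texttt{DOMINATING SET}, this gives membership.

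The main obstacle I anticipate is precisely this last point: ensuring that the passage from a black-box atomistic closure to an explicit dominating-set instance is genuinely polynomial in the input size (which is $|A|$ plus the oracle running time), rather than in $|\Ima(cl)|$, which could be exponential. The resolution — that a generating set only needs to escape the maximal proper closed sets, of which there are at most linearly many in $|A|$ — is exactly the combinatorial heart of Proposition~\ref{prop:eqdom} and should be invoked rather than re-proved. Everything else is a routine transfer of W[2]-completeness along an fpt-reduction that preserves the parameter.
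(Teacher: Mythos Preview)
Your overall strategy is exactly the paper's: derive the corollary from Proposition~\ref{prop:eqdom} together with the W[2]-completeness of \texttt{DOMINATING SET}. The paper's proof is that single sentence, and the hardness direction you spell out (the forward reduction with the parameter $k$ preserved) is correct and matches.

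The gap is in your elaboration of the membership direction. You assert that for an atomistic closure the number of maximal proper closed sets is at most $|A|$, ``each determined by the element it excludes.'' This is false for general atomistic closures. Take $A=\{1,\ldots,n\}$ and declare the closed sets to be all subsets of size at most $\lfloor n/2\rfloor$ together with $A$ itself; intersections of small sets are small and singletons are closed, so this is an atomistic closure (and trivially polytime computable). Its maximal proper closed sets are all $\lfloor n/2\rfloor$-subsets, of which there are $\binom{n}{\lfloor n/2\rfloor}$, exponentially many. Hence your proposed \texttt{HITTING SET} instance is not of polynomial size, and the reduction you sketch does not establish W[2] membership. The bound you have in mind holds only for the \emph{specific} closures produced by the forward construction in Proposition~\ref{prop:eqdom} (where the coatoms are the sets $V\setminus\overline{N}^-(v)$, one per vertex), not for an arbitrary atomistic closure handed to you as a black box.

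The paper avoids this by simply invoking the L-equivalence stated in Proposition~\ref{prop:eqdom} for both directions and not re-deriving the reverse reduction. If you want your write-up to match the paper, cite the proposition for membership as well and drop the coatom-counting argument; if you want to go beyond the paper and make the reverse reduction explicit, you will need a different idea than bounding the number of maximal proper closed sets.
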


Using results of Dinur and Steurer~\cite{Din-14}, by Proposition~\ref{prop:eqdom} we get:
\begin{cor}\label{cor:INAPPROX}
 \texttt{MGS} cannot be approximated to $\bigl(1 - o(1)\bigr) \cdot \ln{n}$ unless P=NP.
\end{cor}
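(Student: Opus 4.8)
The plan is to combine the size-preserving reduction of Proposition~\ref{prop:eqdom} with the optimal inapproximability of \texttt{DOMINATING SET} due to Dinur and Steurer~\cite{Din-14}: for every $\varepsilon>0$ there is no polynomial-time algorithm approximating a minimum dominating set of an $n$-vertex digraph within a factor $(1-\varepsilon)\ln n$, unless $\mathrm{P}=\mathrm{NP}$. Since Proposition~\ref{prop:eqdom} gives polynomial reductions both ways that preserve optimal solutions, the inapproximability should transfer verbatim; the only thing to pin down is that it transfers with the \emph{same} size parameter, so that the clean $(1-o(1))\ln n$ threshold is preserved and not merely some constant fraction of $\ln n$.

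First I would recall the precise shape of the reduction in Proposition~\ref{prop:eqdom}: from a digraph $D=(V,A)$ it produces, in polynomial time, an atomistic closure $cl:2^V\to 2^V$ on the \emph{same} ground set $V$ such that $X\subseteq V$ is a dominating set of $D$ if and only if $cl(X)=V$. In particular the correspondence between feasible solutions on the two sides preserves cardinalities exactly, minimum dominating sets and minimum generating sets have the same size, and the size of the ground set of the \texttt{MGS} instance is exactly $|V|=n$. Hence, given any purported polynomial-time algorithm that on input a closure $cl:2^{A}\to 2^{A}$ outputs a generating set of $A$ of size at most $\rho(|A|)\cdot\mathrm{opt}$, one obtains a polynomial-time algorithm for \texttt{DOMINATING SET}: build $cl$ from $D$, run the algorithm to get $X$ with $cl(X)=V$, and return $X$, which is a dominating set of $D$ with $|X|\le \rho(n)\cdot\mathrm{opt}_{\texttt{MGS}}=\rho(n)\cdot\mathrm{opt}_{\texttt{DS}}$.

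Setting $\rho(n)=(1-\varepsilon)\ln n$ and invoking the Dinur--Steurer bound then forces $\mathrm{P}=\mathrm{NP}$, which proves the corollary. The main (and essentially only) point requiring care is exactly the parameter-tracking step flagged above: a generic L-reduction could blow the ground set up polynomially, in which case one would still get hardness of approximation within $c\cdot\ln n$ for some constant $c<1$, but not the tight $(1-o(1))\ln n$ statement. Here no such blow-up occurs because Proposition~\ref{prop:eqdom} leaves the ground set untouched, so $n$ on the \texttt{DOMINATING SET} side is literally $|A|$ on the \texttt{MGS} side, and the threshold is preserved exactly.
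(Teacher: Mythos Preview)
Your proposal is correct and is exactly the approach the paper takes: the paper simply invokes Dinur--Steurer together with Proposition~\ref{prop:eqdom} without spelling out any further details. Your explicit parameter-tracking---observing that the closure produced in Proposition~\ref{prop:eqdom} lives on the same ground set $V$, so that $|A|=n$ and the $(1-o(1))\ln n$ threshold is preserved rather than merely some constant multiple of $\ln n$---is a worthwhile point that the paper leaves implicit.
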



In~\cite{Pap-96} it is shown that {\texttt{LOGDOMINATING SET}} is LOGSNP (aka LOG[2]) complete, which with Proposition~\ref{prop:eqdom} gives:

\begin{cor}\label{cor:LOGSNP}
 \texttt{LOGMGS} is LOGSNP-complete.
\end{cor}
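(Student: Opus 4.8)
The claim is that \texttt{LOGMGS} is LOGSNP-complete. Since \texttt{LOGDOMINATING SET} is LOGSNP-complete by~\cite{Pap-96}, it suffices by Proposition~\ref{prop:eqdom} to transport both the membership (in LOGSNP) and the hardness through the L-reduction that is already constructed there. The key observation is that Proposition~\ref{prop:eqdom} is more than a polynomial-time reduction preserving \emph{yes}/\emph{no} answers: it is an L-reduction preserving the \emph{optimal solution size exactly}. Hence the parameter $k$ is carried over unchanged, and in particular the constraint $k\le\log(|A|)$ in one problem corresponds to $k\le\log(|V|)$ in the other, up to the polynomial blow-up in ground-set size, which affects the logarithm only by a constant factor (and LOGSNP is, as in~\cite{Pap-96}, robust under replacing $\log n$ by $c\log n$). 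So the first step is simply to check that both directions of the reduction in Proposition~\ref{prop:eqdom} run in logarithmic space (they are the standard dominating-set $\leftrightarrow$ hitting-set manipulations plus taking closed-set lattices, all of which are computable in $\mathsf{L}$), and that the produced closure is atomistic, which was already noted there.

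**Membership.** For \texttt{LOGMGS} $\in$ LOGSNP: given $A$, an atomistic polytime closure $cl$, and $k\le\log(|A|)$, an NP-machine guesses a $k$-subset $X\subseteq A$ — this costs $O(\log(|A|)\cdot\log(|A|))=O(\log^2(|A|))$ bits, but the relevant resource for LOGSNP is the \emph{number of guessed objects}, here $k=O(\log n)$ elements of $A$ — and then verifies $cl(X)=A$ in polynomial time. This matches the syntactic form of a LOGSNP computation exactly as for \texttt{LOGDOMINATING SET} in~\cite{Pap-96}. Alternatively, and more cleanly, one invokes the forward direction of Proposition~\ref{prop:eqdom}: an instance of \texttt{LOGMGS} reduces in logspace to an instance of \texttt{LOGDOMINATING SET} with the same $k$, and the latter is in LOGSNP.

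**Hardness.** For LOGSNP-hardness, take the backward direction of Proposition~\ref{prop:eqdom}: from a directed graph $D=(V,\mathcal A)$ and $k\le\log(|V|)$, build in logspace the atomistic closure $cl$ on ground set $V$ whose proper maximal closed sets are the sets $V\setminus\overline N^-(v)$. By Proposition~\ref{prop:eqdom} the minimum generating set of $cl$ has exactly the size of a minimum dominating set of $D$, and since here the ground set is literally $V$, the bound $k\le\log(|V|)$ translates verbatim. Thus \texttt{LOGDOMINATING SET} many-one reduces (in logspace) to \texttt{LOGMGS}, and restricting to atomistic closures is not a loss because the construction already yields one. Combining the two directions gives LOGSNP-completeness of \texttt{LOGMGS}.

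**Main obstacle.** The only genuinely delicate point is the interplay between the logarithmic cap on $k$ and the change in ground-set size under the reduction. In the \texttt{DOMINATING SET} $\to$ \texttt{MGS} direction the ground set is unchanged, so this is a non-issue; in the \texttt{MGS} $\to$ \texttt{DOMINATING SET} direction one must check that the number of maximal proper closed sets (which becomes the vertex set of $D$) is polynomially bounded and that $\log$ of it is $\Theta(\log|A|)$, so that the parameter regime $k\le\log$ is preserved under the reduction; both hold because $|\Ima(cl)|$ can be taken polynomial in the relevant presentation and LOGSNP is insensitive to constant factors in the logarithm, exactly as exploited in~\cite{Pap-96}. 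Once this bookkeeping is in place, Corollary~\ref{cor:LOGSNP} follows.
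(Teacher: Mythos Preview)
Your proposal is correct and follows exactly the paper's approach: invoke the LOGSNP-completeness of \texttt{LOGDOMINATING SET} from~\cite{Pap-96} and transfer it via Proposition~\ref{prop:eqdom}. The paper literally states the corollary as an immediate consequence of these two ingredients, so your write-up is a faithful (if considerably expanded) unpacking of the same one-line argument.

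One remark on your ``main obstacle'' paragraph: the \texttt{MGS} $\to$ \texttt{DOMINATING SET} direction is not actually needed, and your sketch of it is shaky --- for an arbitrary polytime atomistic closure on $A$ there is no reason the number of maximal proper closed sets should be polynomial in $|A|$. Fortunately this does not matter: hardness uses only the \texttt{DOMINATING SET} $\to$ \texttt{MGS} direction (where the ground set is literally $V$, so $k\le\log|V|$ becomes $k\le\log|A|$ exactly), and membership in LOGSNP follows directly from your guess-and-verify description, so you can simply drop that paragraph.
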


Since the $\conv$-operator for graphs is an atomistic closure, we wonder if similar results can be proved for the hull-number problem, or if this problem is essentially easier. For instance in~\cite{Ara-13b}, a fixed parameter tractable algorithm to compute the hull-number of any graph was obtained. But there the parameter is the size of a vertex cover. How about the complexity when parameterized by the size of a solution?

It is well-known that closures correspond to lattices in the following way: Given a closure $cl$ define the inclusion order on the closed sets, i.e.,  $\Ima(cl)$. Since this order has a unique maximal element $A$ and the intersection of closed sets is closed, it is a lattice. On the other hand given a lattice $L$ with set $J$ of join-irreducibles associate to every $\ell\in L$ the set $J_{\ell}$ of join-irreducibles that are less or equal than $\ell$. Now, $L$ is the inclusion order on $\{J_{\ell}\mid \ell\in L\}$. In turn the latter is the set of closed sets of $cl:2^J \rightarrow 2^J$ defined as $X\mapsto J_{\bigvee X}$, where $\bigvee X$ denotes the join of $X$. This is easily seen to be a closure. 
Recall that a lattice is \emph{atomistic} if all its elements can be generated as joins of atoms. The corresponding class of closures are precisely the atomistic closures. Now we can state the following :

\begin{quest}\label{quest:graphiclattices}
 What are the atomistic lattices that come from the convex subgraphs of a graph?
\end{quest}

Clearly, these lattices are quite special, in particular any such lattice is entirely determined by its first two levels, since these correspond to vertices and edges of the graph. On the other hand, it is not clear what other properties such lattices enjoy. For instance, the graph in Figure~\ref{fig:graphworank} shows that convexity lattices of graphs are not \emph{graded} in general, i.e., not all maximal chains are of the same length.

\begin{figure}[htb]
\begin{center}
\includegraphics{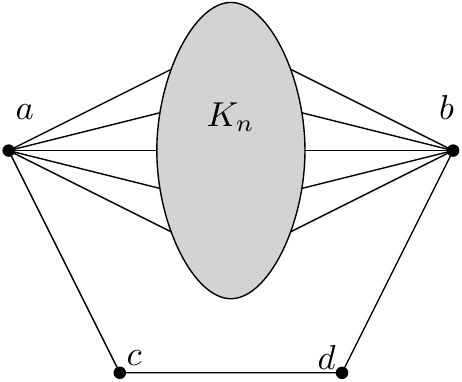}
\end{center}
\caption{\label{fig:graphworank} A family of graphs with lattice of convex subgraphs being arbitrary far from ranked.}
\end{figure} 

For example, \emph{Ptolemaic} graphs are exactly those graphs whose lattice of convex subgraphs is lower locally distributive~\cite{Far-86}.
Also, in~\cite{Alb-16} the lattices of convex subgraphs of partial cubes were characterized.  
However, we do not know how to make use of this characterization. 

Let us now approach the hull-number problem in partial cubes via a different reduction.
We call {\texttt{COORDINATE REVERSAL}} the following problem: \begin{quote}
 Given a set $X$ of vertices of the hypercube $Q_d$ and an integer $k$, is there a subset $X'\subseteq X$, with $|X'|\leq k$ and such that for every coordinate $e$ of $Q_d$ there are vertices $x,y\in X'$ with $x_e\neq y_e$.
                                                          
                                                          \end{quote}
 
\begin{prop}\label{prop:COORDINATE}
 \texttt{COORDINATE REVERSAL} is L-equivalent to \texttt{HITTING SET}.
\end{prop}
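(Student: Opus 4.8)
The plan is to establish the equivalence via two L-reductions, one in each direction, exploiting the natural dictionary between coordinates of a hypercube and elements of a ground set, and between vertices and subsets.

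First I would reduce \texttt{HITTING SET} to \texttt{COORDINATE REVERSAL}. Given a \texttt{HITTING SET} instance consisting of a ground set $U=\{u_1,\dots,u_d\}$ and a family $\mathcal{S}=\{S_1,\dots,S_m\}$ of subsets of $U$, I would identify the coordinates of $Q_d$ with the elements of $U$. For each set $S_j\in\mathcal{S}$ I would create a vertex $x^j\in Q_d$ whose support (the set of coordinates equal to $1$) is exactly $S_j$, and I would add the all-zeros vertex $\mathbf 0$ to the collection, so $X=\{\mathbf 0\}\cup\{x^j\mid j\in[m]\}$. Then for a subset $X'\subseteq X$, the condition that for every coordinate $e$ there are $x,y\in X'$ with $x_e\neq y_e$ is equivalent to: the supports of the vertices in $X'\setminus\{\mathbf 0\}$, together with $\mathbf 0$, cover all $d$ coordinates — which (since $\mathbf 0$ contributes nothing) just says the corresponding sets $S_j$ cover $U$, i.e.\ form a hitting set in the transposed sense. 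To make this literally a hitting set of $\mathcal S$ I would instead use the transposed incidence structure: build $\mathcal S$'s element-set incidence matrix and take as the vertex set $X$ the rows, with coordinates indexed by $\mathcal S$; a coordinate $S_j$ is "reversed" by $X'$ iff $X'$ contains some row $u_i$ with $u_i\in S_j$, so $X'$ reverses all coordinates iff $\{u_i : $ row $i\in X'\}$ hits every $S_j$. Including $\mathbf 0$ ensures no coordinate needs two distinct $1$'s. This is clearly a log-space computable transformation and preserves optimal solution sizes, hence is an L-reduction.

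For the reverse direction, reducing \texttt{COORDINATE REVERSAL} to \texttt{HITTING SET}: given $X\subseteq Q_d$, I would pick any fixed base vertex $x^0\in X$ and define, for each coordinate $e\in[d]$, the set $T_e=\{x\in X : x_e\neq x^0_e\}$. The family is $\mathcal{T}=\{T_e\mid e\in[d]\}$ over ground set $X$. A subset $X'\subseteq X$ reverses every coordinate iff for each $e$ there are $x,y\in X'$ differing in coordinate $e$, equivalently (since one of them must differ from $x^0$ in coordinate $e$, and conversely any vertex in $T_e$ differs from $x^0$) iff $X'$ intersects every $T_e$ — i.e.\ $X'$ is a hitting set of $\mathcal T$. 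One subtlety: a minimum coordinate-reversal set need not contain $x^0$, but given any hitting set $H$ of $\mathcal T$ we can reverse all coordinates using $H\cup\{x^0\}$, and conversely any reversing set, after possibly swapping in $x^0$ for one of its elements (which cannot increase size), becomes a hitting set; so the optimal values match up to this $+1$/$-1$ bookkeeping. I would state the reduction so that optimal solution sizes are exactly preserved — e.g.\ by always including $x^0$ for free or by the symmetric argument that for any two vertices of $X'$ witnessing coordinate $e$, at least one lies in $T_e$, so $X'$ reversing all coordinates already implies $X'$ hits every $T_e$ directly, making $X'\mapsto X'$ itself the reduction with no adjustment needed.

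The main obstacle is the bookkeeping around the base vertex and inclusion-freeness: I must make sure the correspondence preserves \emph{optimal} solution sizes exactly (as required for an L-reduction in the strong sense used in Proposition~\ref{prop:eqdom}), not just up to an additive constant. I expect to resolve this by the observation in the previous paragraph — that "$X'$ reverses coordinate $e$" implies "$X'\cap T_e\neq\emptyset$" without needing $x^0\in X'$, since if $x,y\in X'$ differ in coordinate $e$ then they cannot both agree with $x^0$ there — which gives that reversing sets are exactly hitting sets of $\mathcal T$, a bijective correspondence on feasible solutions and hence on optimal ones. The forward direction's only real content is checking that adjoining $\mathbf 0$ (or choosing the transposed incidence structure) makes the "differ in coordinate $e$" condition collapse to plain set-cover/hitting behavior, which is routine. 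Both maps are evidently computable in logarithmic space, completing the L-equivalence.
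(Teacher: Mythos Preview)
Your proposal has a genuine gap in both directions: the claimed bijection between feasible solutions fails, so the optimal values need not match exactly.

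For \texttt{COORDINATE REVERSAL} $\to$ \texttt{HITTING SET}, you correctly show that any reversing set $X'$ hits every $T_e$ (since two vertices differing in coordinate $e$ cannot both agree with $x^0$ there). But the converse---that every hitting set of $\{T_e\}$ is a reversing set---is false. A hitting set $H$ contains, for each $e$, some vertex with $x_e\neq x^0_e$; it need not contain any vertex with $x_e=x^0_e$. Concretely, take $X=\{00,11\}\subseteq Q_2$ with $x^0=00$: then $T_1=T_2=\{11\}$, so $\{11\}$ is a minimum hitting set of size $1$, while any reversing set needs both vertices. So your sentence ``reversing sets are exactly hitting sets of $\mathcal T$'' is wrong, and the optimal values can differ by $0$ or $1$ depending on the instance. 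The paper avoids this by taking, for each coordinate $e$, \emph{both} halfspaces $X^+_e=\{x\in X:x_e=+\}$ and $X^-_e=\{x\in X:x_e=-\}$ as sets to be hit; then a hitting set must contain a vertex with each sign at each coordinate, which is exactly the reversal condition.

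Your \texttt{HITTING SET} $\to$ \texttt{COORDINATE REVERSAL} direction has the symmetric issue. Adding $\mathbf 0$ to the rows of the incidence matrix lets you build a reversing set of size $(\text{opt hitting})+1$, but nothing forces $\mathbf 0$ into an \emph{optimal} reversing set: if the rows themselves already contain, for each $S_j$, both a member and a non-member, then the rows alone can reverse all coordinates and the $+1$ disappears. Again the shift is $0$ or $1$ indeterminately. The paper fixes this by first assuming $U\in\mathcal S$ (harmless) and then adding a fresh element $x$ together with all complemented sets $(U\setminus S)\cup\{x\}$; since $U\in\mathcal S$ yields the singleton $\{x\}$ among the new sets, $x$ is forced into every hitting set, and under the embedding into $Q_d$ the vertex $x$ (all $-1$'s) is forced into every reversing set. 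This makes the additive shift exactly $1$, hence computable.
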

\begin{proof}
Let $(V,\mathcal{S})$ be an instance of hitting set with $\mathcal{S}=\{S_1, \ldots, S_k\}$, where $S_k=V$. This assumption clearly does not change the problem.
Assume that for any two vertices $u,v\in V$ there is a set in $\mathcal{S}$ containing $u$ but not $v$, which is clearly OK as an assumption. Second, we extend $V$ with one vertex $x$ and $\mathcal{S}$ with the set of complements with respect to the new ground set, i.e., $\mathcal{S}'=\{(V\setminus S)\cup\{x\}\mid S\in\mathcal{S}\}$. Note that since $V\in \mathcal{S}$, we have $\{x\}\in\mathcal{S}'$. The new instance $(V',\mathcal{S}')$ has a hitting set of size $k$ if and only if the old one has one of size $k-1$. 

We can interpret the hitting set instance $(V',\mathcal{S}')$ as a set of vertices of $V'$ of the hypercube of dimension $d=\frac{|\mathcal{S}'|}{2}$. Every vertex $v\in V'$ has coordinates $v_i=\begin{cases}
                                                                                                                                                                                                    1, & v\in S_i,\\
-1, & v\in (V\setminus S_i)\cup\{x\}.\\                                                                                                                                                                                    
                                                                                                                                                                                                   \end{cases}$
A hitting set of $(V',\mathcal{S}')$ corresponds to a solution of \texttt{COORDINATE REVERSAL}, i.e., a minimum subset of $V'$ such that each coordinate is reversed, i.e., appears once positive and once negative. 

Conversely, an instance $(X,k)$ of \texttt{COORDINATE REVERSAL} in $Q_d$ is equivalent to the \texttt{HITTING SET} instance $(X,\mathcal{S})$, where $\mathcal{S}=\{X^+_e,X^-_e\mid e\in [d]\}$ and $X^{\pm}_e=\{x\in X\mid x_e=\pm\}$.
%
\end{proof}

Now, in~\cite{Alb-16} it is shown that in a partial cube $G=(V,E)\subseteq Q_d$ \texttt{HULL-NUMBER} coincides with \texttt{COORDINATE REVERSAL} for $V$ and $Q_d$, therefore, \texttt{HULL-NUMBER} in partial cubes is a special case of \texttt{COORDINATE REVERSAL}. In order to L-reduce \texttt{COORDINATE REVERSAL} to partial cube hull-number along the lines of Proposition~\ref{prop:COORDINATE}, it would be interesting to check, if given a subset $V'\subseteq Q_d$ a smallest partial cube containing $V'$ has to be polynomial in $|V'|+d$. Moreover it is important to maintain the same solution size with respect to \texttt{COORDINATE REVERSAL}. In~\cite[Theorem 15.59]{Flu-06} it is shown that {\texttt{LOGHITTING SET}} is LOGSNP (aka LOG[2]) complete. Hence, this would show LOGSNP-completeness of \texttt{LOGHULL-NUMBER} for partial cubes, one instance of which is calculating the dimension of a poset given its linear extensions, see~\cite{Alb-16}. So as a first step we wonder:

\begin{quest}~\label{quest:isohullcube}
 Let $X$ be a set of vertices of the hypercube $Q_d$, does there exist an isometric subgraph $G$ of $Q_d$, containing $X$, such that $|G|$ is polynomial in $|X|+d$?
\end{quest}

Let $M_k$ a $(0,1)$-matrix whose columns are all the $(0,1)$-vectors of length $k$. Now, $X_k\subseteq Q_{2^k}$ is defined as the set of rows of $M_k$. We do not know the answer to Question~\ref{quest:isohullcube} for the set $X_k$.

These questions lead to the problem of computing a small isometric subgraph containing a given set of vertices, which is the subject of the next section.

\section{Isometric hull}\label{sec:iso}

We recall the definitions related to the isometric hull from the introduction. Let $G=(V,E)$ be a graph. For any $v,u \in V$, let $\dist_G(u,v)$ denote the {\it distance} between $u$ and $v$, i.e., the minimum number of edges of a path between $u$ and $v$ in $G$. A subgraph $H=(V',E')$ (i.e., $V' \subseteq V$ and $E' \subseteq (V'\times V') \cap E)$) of $G$ is {\it isometric} if $\dist_G(u,v)=\dist_H(u,v)$ for any $u,v \in V'$.
Given $S \subseteq V$, an {\it isometric hull} of $S$ is any subgraph $H=(V',E')$ of $G$ such that $S \subseteq V'$ and $H$ is isometric. An isometric hull $H=(V',E')$ of $S$ is {\it minimum} if $|V'|$ is minimum, i.e., there are no isometric hulls of $S$ with strictly less vertices. 
Note that a set $S$ may have several minimum isometric hulls. As an example, consider the $4$-node cycle $C_4=(a,b,c,d)$: the subgraphs induced by $\{a,b,c\}$ and $\{a,d,c\}$ are minimum isometric hulls of $S=\{a,c\}$. More generally, for any $S=\{u,v\}$, inclusion-minimal isometric hulls of $S$ are any shortest path between $u$ and $v$. So, if $|S|=2$, all minimal isometric supergraphs of $S$ are of the same size and computing a minimum isometric hull of $S$ is easy. For $|S|>2$ it is easy to find examples with minimal isometric supergraphs that are not minimum. We show below that computing a minimum isometric hull is NP-complete if $|S|>2$.
An {\it isometric-hull set} (or simply {\it hull set}) $S \subseteq V$ of $G$ is any subset of the vertices such that $G$ is the (unique) minimum isometric hull of $S$. 

This section is devoted to prove the following theorems. 

\begin{thm}\label{theo:minIsomHull}
Given an $n$-node bipartite graph $G=(V,E)$, $S \subseteq V$ and $k\in \mathbb{N}$, deciding whether there exists an isometric hull $H$ of $S$ with $|V(H)|\leq k$ is NP-complete, even if $|S|=3$ and $k=n-1$.
In particular, deciding whether a set $S$ of vertices is a hull set of a graph is coNP-complete.
\end{thm}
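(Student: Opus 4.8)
The plan is to reduce from a suitable NP-complete problem so that a ``yes'' instance corresponds to an isometric hull of $S=\{s_1,s_2,s_3\}$ of size at most $n-1$, i.e., to the existence of a single vertex $w$ that lies on no shortest path between any two of the three terminals and whose removal leaves all three pairwise distances unchanged. The natural source problem is a variant of \texttt{SAT} or, more conveniently, a Steiner-tree-flavored problem, since an isometric hull of three vertices is a ``metric median/Steiner'' structure. First I would build a bipartite graph $G$ with three designated terminals and engineer the shortest-path metric so that there is exactly one ``candidate'' vertex $w$ that could conceivably be deleted; the gadgetry must force that $w$ can be dropped (keeping isometry) if and only if the underlying combinatorial instance is satisfiable. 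Bipartiteness is enforced by subdividing edges as needed and checking parity of all distances.

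The key steps, in order: (1) Fix the three terminals $s_1,s_2,s_3$ and a target total ``budget'' so that $\dist_G(s_i,s_j)$ is realized by many internally disjoint shortest paths except possibly through the special vertex $w$; the point is that deleting any vertex other than $w$ immediately breaks some distance, so the only possible size-$(n-1)$ isometric hull is $V\setminus\{w\}$. (2) Attach to $w$ a verification gadget encoding the SAT (or hitting-set / dominating-set style) instance so that, in $G$, $w$ participates in a shortest $s_i$--$s_j$ path; then show $G-w$ is isometric (all three distances preserved) if and only if the instance has a solution — a solution provides an alternative detour of the same length avoiding $w$, and conversely any such detour decodes to a solution. (3) Verify that $G$ is bipartite and that $k=n-1$ is exactly the threshold, and that everything is polynomial-size. (4) For the ``in particular'' clause: a set $S$ with $|S|=3$ fails to be a hull set of $G$ precisely when there exists a proper isometric subgraph containing $S$, i.e., when some vertex (in particular $w$, by the construction) can be removed while preserving the $S$-distances; so ``$S$ is a hull set'' is the complement of an NP problem — given a certificate vertex $v$ and the claim $G-v \supseteq$ an isometric hull of $S$, one can check isometry in polynomial time — hence co-NP, and the reduction above shows co-NP-hardness, giving co-NP-completeness. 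One should note that ``$S$ is a hull set'' here means $G$ is the unique minimum isometric hull, so one must also argue no smaller isometric supergraph exists, which again reduces to checking deletability of single vertices and is subsumed by the NP-hardness of the size-$(n-1)$ question.

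The main obstacle I expect is step (2): designing a gadget around the single vertex $w$ that simultaneously (a) keeps $G$ bipartite, (b) makes $w$ genuinely necessary for isometry in the ``no'' case — so that every shortest path between some pair of terminals is forced through $w$ — while (c) in the ``yes'' case producing an alternative equal-length path avoiding $w$, and (d) not accidentally creating shortcuts that change $\dist_G(s_i,s_j)$ or allow deleting some other vertex. Controlling all pairwise distances among three terminals at once (rather than just two, where a shortest path trivially works) is precisely what makes $|S|=3$ the interesting case, and getting the parities and path-lengths to line up in a bipartite graph is the delicate bookkeeping. I would likely route the three terminals through a common ``hub'' region containing $w$, with the SAT gadget offering a bypass of matched length, and spend most of the effort verifying no unintended short path appears.
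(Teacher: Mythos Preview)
Your plan has a fundamental gap at step~(2). You want a single fixed candidate vertex $w$ and then to arrange that ``$G-w$ is isometric if and only if the instance has a solution.'' But for a fixed graph $G$ and a fixed vertex $w$, deciding whether $G-w$ is isometric is a polynomial-time computation: just compare all pairwise distances in $G$ and in $G-w$. The ``alternative detour of the same length avoiding $w$'' either exists in the fixed graph $G$ or it does not --- its existence cannot be contingent on the satisfiability of a formula encoded in that same fixed graph. So if your construction really forces the only possible size-$(n{-}1)$ hull to be $V\setminus\{w\}$, as step~(1) claims, then you have reduced an NP-hard problem to a problem in~P, which cannot yield NP-hardness. (Allowing finitely many fixed candidates $w_1,\dots,w_t$ does not help either; one just checks each.)

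The paper's proof is structurally different and exploits precisely what your plan suppresses: the many \emph{choices} available when building an isometric hull. For each variable there is a $4\alpha$-cycle with two designated antipodal vertices in $S$; any isometric hull must contain at least one of the two shortest paths between them, and this binary choice encodes a truth value. Clause-gadgets are recursively constructed ``triangles'' $T_\gamma$ of size $\Theta(\gamma^2)$ with the property that once all three corners lie in an isometric hull, the entire $T_\gamma$ must follow. A satisfying assignment leaves some corner of every $T_\gamma$ out, giving a small hull; for an unsatisfiable formula, every choice of paths forces all three corners of some $T_\gamma$, and via an auxiliary vertex $q$ adjacent to all clause-centers this cascades until the hull is all of $G$. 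Only after this is $S$ compressed to three vertices by a separate, easy padding construction (attach the old terminals by long paths to three new terminals $x,y,z$ so that any isometric hull of $\{x,y,z\}$ is forced to contain the old terminals). The ``in particular'' clause then follows because ``$S$ is not a hull set'' is exactly ``$S$ has an isometric hull of size $<n$,'' which is in NP with the hull itself --- not a single deletable vertex --- as certificate.
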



\begin{thm}\label{theo:minHullSet}
Given a graph $G$ and $k \in \mathbb{N}$, the problem of deciding whether $G$ admits an isometric-hull set of size at most $k$ is $\Sigma_2$-complete.
\end{thm}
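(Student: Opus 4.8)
The plan is to prove Theorem~\ref{theo:minHullSet} by establishing membership in $\Sigma_2^P$ and then $\Sigma_2^P$-hardness, the latter being the substantial part. For membership, observe that ``$G$ has an isometric-hull set of size at most $k$'' can be written as: there exists $S \subseteq V$ with $|S| \le k$ such that for all $V' \subsetneq V$ with $S \subseteq V'$, the induced subgraph on $V'$ is \emph{not} isometric, i.e., there exist $u,v \in V'$ with $\dist_{G[V']}(u,v) > \dist_G(u,v)$. The outer quantifier is $\exists$ over a polynomial-size object, the inner is $\forall$ over a polynomial-size object, and the innermost predicate (checking whether a given $V'$ fails to be isometric) is computable in polynomial time by running BFS in $G$ and in $G[V']$. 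Hence the problem is in $\Sigma_2^P$. (One must be slightly careful: by Theorem~\ref{theo:minIsomHull}, $G$ being the minimum isometric hull of $S$ is the same as $G$ being the unique minimum isometric hull, which is equivalent to no proper subset $V'\supseteq S$ inducing an isometric subgraph; this is exactly the $\forall$-statement above.)

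For $\Sigma_2^P$-hardness I would reduce from a canonical $\Sigma_2^P$-complete problem, namely $\exists\forall$-3SAT (or, closer to the combinatorial flavor here, the problem whether for a given hypergraph/CNF there is a small set of variables whose fixing forces unsatisfiability of the rest): given a formula $\exists x_1 \cdots \exists x_p \, \forall y_1 \cdots \forall y_q \; \phi(x,y)$, decide whether it is true. The idea is to build a graph $G$ together with a target $k$ (roughly $k = p$ plus bookkeeping) so that a size-$k$ isometric-hull set must ``select'' a truth assignment to the $x$-variables, and so that the closure-under-shortest-paths process plays the role of the universal quantifier: the isometric hull of a chosen set $S$ equals all of $V$ if and only if, for \emph{every} assignment to the $y$-variables, $\phi$ is satisfied (equivalently, every ``bad'' gadget vertex encoding a falsifying $y$-assignment gets swallowed because some shortest path is forced through it). The building blocks would be: variable-selection gadgets (pendant-like structures or a clique/antichain forcing that exactly one of $x_i$-true, $x_i$-false vertices lies in any small hull set), a large ``arena'' encoding all $y$-assignments where geodesics between selected $x$-vertices are routed so as to cover precisely the vertices corresponding to $y$-assignments satisfying $\phi$, and distance-padding (subdivisions, as in the bipartite construction of Theorem~\ref{theo:minIsomHull}) to control which paths are shortest. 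It is natural to reuse the NP-hardness gadget from the proof of Theorem~\ref{theo:minIsomHull} for the inner (coNP) level and wrap a variable-selection layer around it for the outer ($\exists$) level.

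The key steps in order would be: (1) fix the source problem as $\exists\forall$-3SAT and recall its $\Sigma_2^P$-completeness; (2) describe the gadget that forces any hull set of size $k$ to encode an $x$-assignment, proving a ``normalization'' lemma that any hull set can be assumed to be of this canonical form without increasing its size; (3) describe the arena gadget and prove the crucial geometric claim — for a fixed $x$-assignment, the isometric hull of the corresponding canonical set $S$ is all of $V$ iff the $\forall y\,\phi(x,y)$ clause holds — which requires carefully computing distances in $G$ and identifying, for each partial/total $y$-assignment, whether the corresponding vertex lies on a shortest path between two vertices already in the hull; (4) assemble the two directions of the reduction and set $k$ appropriately; (5) verify polynomiality of the construction. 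The main obstacle is step~(3): engineering the distances so that ``closing under geodesics'' simulates a universal quantifier is delicate, because the isometric hull operator (unlike the convex hull) is not even a well-defined closure — a set can have several minimum isometric hulls — so one must control not just which vertices \emph{can} be added but that \emph{all} minimum hulls coincide with $V$ exactly when the $\Pi_1$ clause is true. I expect this to require an iterated-closure analysis (round by round, as shortest paths of increasing length get forced in) together with a rigidity argument showing the padded graph has unique geodesics except where multiplicity is harmless. Once the distance bookkeeping is in place, the correctness proof and the complexity bookkeeping are routine.
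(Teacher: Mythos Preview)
Your plan is essentially the paper's proof: membership via the $\exists S\,\forall V'$ formulation (the paper phrases it as ``check $S$ with an NP oracle'' using Theorem~\ref{theo:minIsomHull}), and hardness by reducing from $QSAT_2$, reusing the gadget of Lemma~\ref{lem2:minIsomHull} for the inner level and adding a selection layer for the $x$-variables. The concrete realizations you leave open are resolved in the paper as follows: the formula is taken in 3-\emph{DNF} (so that a satisfied clause has all three corners in the hull and triggers the whole $\gamma$-triangle, hence via $q$ the whole graph); forced membership in every hull set is achieved by attaching degree-one vertices $r',dd_i,gg_i$; and the $x$-selection gadget is a long odd path $H^i$ of length $\delta>\mathrm{diam}$ between $p^x_i$ and $n^x_i$, whose internal vertices are not needed for any distance in $G\setminus H^i$, so any hull set must pick one, and its side of the midpoint encodes the truth value of $x_i$.
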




Let us start with an easier result  where neither the size of the input set $S$ nor the size $k$ of the isometric hull are constrained, but where the class of bipartite graphs is restricted to have diameter $3$. 

\begin{lem}\label{lem:minIsomHull}
Given $G=(V,E)$ bipartite with diameter $3$, $S \subseteq V$ and $k\in \mathbb{N}$, deciding if there exists an isometric hull $I$ of $S$ with $|V(I)|\leq k$ is NP-complete.
\end{lem}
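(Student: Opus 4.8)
The plan is to reduce from a suitable NP-hard covering/hitting problem, exploiting the fact that in a diameter-$3$ bipartite graph, geodesics are short and easy to control. A natural choice is to reduce from \texttt{HITTING SET} (or equivalently \texttt{VERTEX COVER}/\texttt{DOMINATING SET}), since the characterization of small isometric subgraphs containing a prescribed set of vertices is, morally, about ``reversing/covering'' all the relevant distances, and the earlier sections of the paper already make the link between hull-type problems and \texttt{HITTING SET} explicit. So first I would fix an instance $(U,\mathcal{S})$ of \texttt{HITTING SET} with $\mathcal{S}=\{S_1,\dots,S_m\}$ and an integer $k$, and build a bipartite graph $G$ whose shortest-path structure forces any isometric hull of a carefully chosen set $S$ to pick, for each $S_i$, at least one ``element gadget vertex'' corresponding to a member of $S_i$.

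The construction I would try: take the bipartite graph with one side containing a vertex $a_u$ for each $u\in U$, the other side containing a vertex $b_i$ for each set $S_i$, plus a constant number of extra ``anchor'' vertices used to pin down all pairwise distances to be $1$, $2$, or $3$. Put $S$ to be (some of) these anchor vertices, chosen so that $\dist_G(s,s')=3$ for the relevant pairs; the point is that the only $(s,s')$-shortest paths route through exactly one $a_u$ with $u\in S_i$ for each index $i$, so any isometric subgraph containing $S$ must, for every $i$, contain at least one $a_u$ with $u \in S_i$ — i.e. the set of chosen $a_u$'s projects to a hitting set. Conversely, any hitting set $T$ of size $\le k$ yields an isometric hull: take $S$ together with $\{a_u : u\in T\}$ and the anchors, and check directly (using that $G$ has diameter $3$) that all original distances are preserved, so its size is $|S| + k + O(1)$. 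I would set the bound in the lemma to be exactly this quantity. Membership in NP is immediate: guess $V(I)$ with $|V(I)|\le k$ containing $S$, then verify in polynomial time that $\dist_{I}(u,v)=\dist_G(u,v)$ for all $u,v\in V(I)$ by running BFS in both graphs.

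The main steps, in order, are: (1) describe the gadget and the distinguished set $S$ precisely, and verify $G$ is bipartite with diameter $3$; (2) prove the forward direction — a hitting set of size $k$ gives an isometric hull of the prescribed size — which is the routine ``distances are preserved because everything is close'' check, made easy by diameter $3$; (3) prove the backward direction — an isometric hull of the prescribed size, after discarding vertices that cannot help, yields a hitting set of size $\le k$; (4) note membership in NP and conclude NP-completeness.

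The hard part will be step (3) and, intertwined with it, getting the gadget exactly right so that the correspondence is tight. The subtlety is that an isometric hull could in principle ``cheat'' by including extra vertices (other $a_u$'s, or $b_i$'s, or anchors) that restore distances in an unintended way, or omit an $a_u$ for some $S_i$ but still keep $\dist$ equal to $3$ via a longer detour — which is exactly what diameter $3$ is supposed to forbid, but only if the gadget is arranged so that every $(s,s')$-geodesic of length $3$ genuinely passes through an element vertex of each set. I expect to need a few extra pendant/anchor vertices and edges precisely to kill such alternative geodesics and to make the minimum isometric hull size a clean affine function of the hitting-set size; verifying that no unintended isometric subgraph of the target size exists is where the real case analysis lives. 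Once the gadget is pinned down, Theorem~\ref{theo:minIsomHull} should then follow by a more delicate refinement of the same idea that additionally controls $|S|$ down to $3$ and $k$ down to $n-1$.
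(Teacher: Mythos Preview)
Your high-level plan---reduce from \texttt{HITTING SET} via the incidence bipartite graph of the instance plus a couple of anchor vertices---is exactly what the paper does. The gap is in your choice of the distinguished set $S$. You propose $S$ to be a constant-size subset of the anchors, and this cannot work as you describe it: with $|S|=O(1)$ there are only $O(1)$ pairs $(s,s')$, and in a diameter-$3$ graph each geodesic between such a pair has at most two internal vertices, so the distance constraints among vertices of $S$ alone impose only a bounded number of conditions. There is no way that ``the only $(s,s')$-shortest paths route through exactly one $a_u$ with $u\in S_i$ for each index $i$'' when $i$ ranges over $m$ sets but you have only constantly many pairs and length-$3$ paths. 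You might hope that cascading---new constraints appearing once a vertex is added---rescues this, but nothing in your sketch controls the cascade so as to encode $m$ independent hitting conditions, and in diameter $3$ the cascade is quite tame.

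The paper's fix is simply to put the set-vertices themselves into $S$. Concretely: add a vertex $x$ adjacent to every element-vertex $a_u$ and a vertex $y$ adjacent to every set-vertex $b_j$, and take $S=\{x,b_1,\dots,b_m\}$. The graph is bipartite of diameter $3$; the vertex $y$ is forced into any isometric hull (assume two of the sets are disjoint, so the corresponding $b_j$'s are at distance $2$, realised only through $y$); and for each $j$ the pair $(x,b_j)$ is at distance $2$, so any isometric hull must contain a common neighbour, i.e.\ some $a_u$ with $u\in S_j$. Hence an isometric hull of size $m+2+k$ exists iff there is a hitting set of size $k$. Note the two essential differences from your sketch: the relevant constraints are distance-$2$ rather than distance-$3$, and $|S|=m+1$ is not constant. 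With $S$ chosen this way your steps (2) and (4) go through verbatim, and step (3) becomes a one-line observation with no case analysis needed.
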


\begin{proof}
The problem is clearly in NP since testing whether a subgraph is isometric can be done in polynomial-time. 

To prove that the problem is NP-hard, let us present a reduction from the \texttt{\texttt{HITTING SET}} Problem that takes a ground set $U = \{u_1,\cdots,u_n\}$ and a set ${\mathcal X} = \{X_1,\cdots,X_m\} \subseteq 2^U$ of subsets of $U$ and an integer $k$ as inputs and aims at deciding if there exists $K \subseteq U$ of size at most $k$ such that $K \cap X_j \neq \emptyset$ for every $j\leq m$. 
Note that we may assume that at least two sets of $\mathcal S$ are disjoint (up to adding a dummy vertex in $U$ and a set restricted to this vertex). 

Let us build the graph $G$ as follows. We start with the incidence graph of $(U,{\mathcal X})$, i.e., the graph with vertices $U \cup {\mathcal X}=\{u_1,\cdots,u_n,X_1,\cdots,X_m\}$ and edges $\{u_i,X_j\}$ for every $i \leq n$, $j\leq m$ such that $u_i \in X_j$. Then add a vertex $x$ adjacent to every vertex in $U$ and a vertex $y$ adjacent to every vertex in $\mathcal X$. Note that $G$ has diameter $3$. Finally, let $S= \{x\} \cup {\mathcal X}$. 

We show that $(U,{\mathcal X})$ admits a hitting set of size $k$ if and only if $S$ has an isometric hull of size $k+m+2$. Note that, because at least two sets are disjoint, $y$ must be in any isometric hull of $S$ (to ensure that these sets are at distance two). Moreover, for every set containing (at least) $x$, $y$ and $\mathcal X$, all distances are preserved but possibly the ones between $x$ and some vertices of $\mathcal X$. We show that  $I$ is an isometric hull of $S$ if and only if $K=V(I) \setminus (S \cup \{y\})$ is a hitting set of $(U,{\mathcal X})$. Indeed, for every $j\leq m$, the distance between $X_j$ and $x$ equals $2$ in $I$ if and only if $K$ contains a vertex $u_i$ adjacent to $X_j$, i.e., $K \cap X_j \neq \emptyset$ for every $j\leq m$. 
\end{proof}


Now, let us consider a restriction of Theorem~\ref{theo:minIsomHull} in the case $k=n-1$ (without constraint on $|S|$). For this purpose, we present a reduction from 3-SAT.



\begin{figure}[h!]
\begin{center}
\scalebox{0.4}{\includegraphics{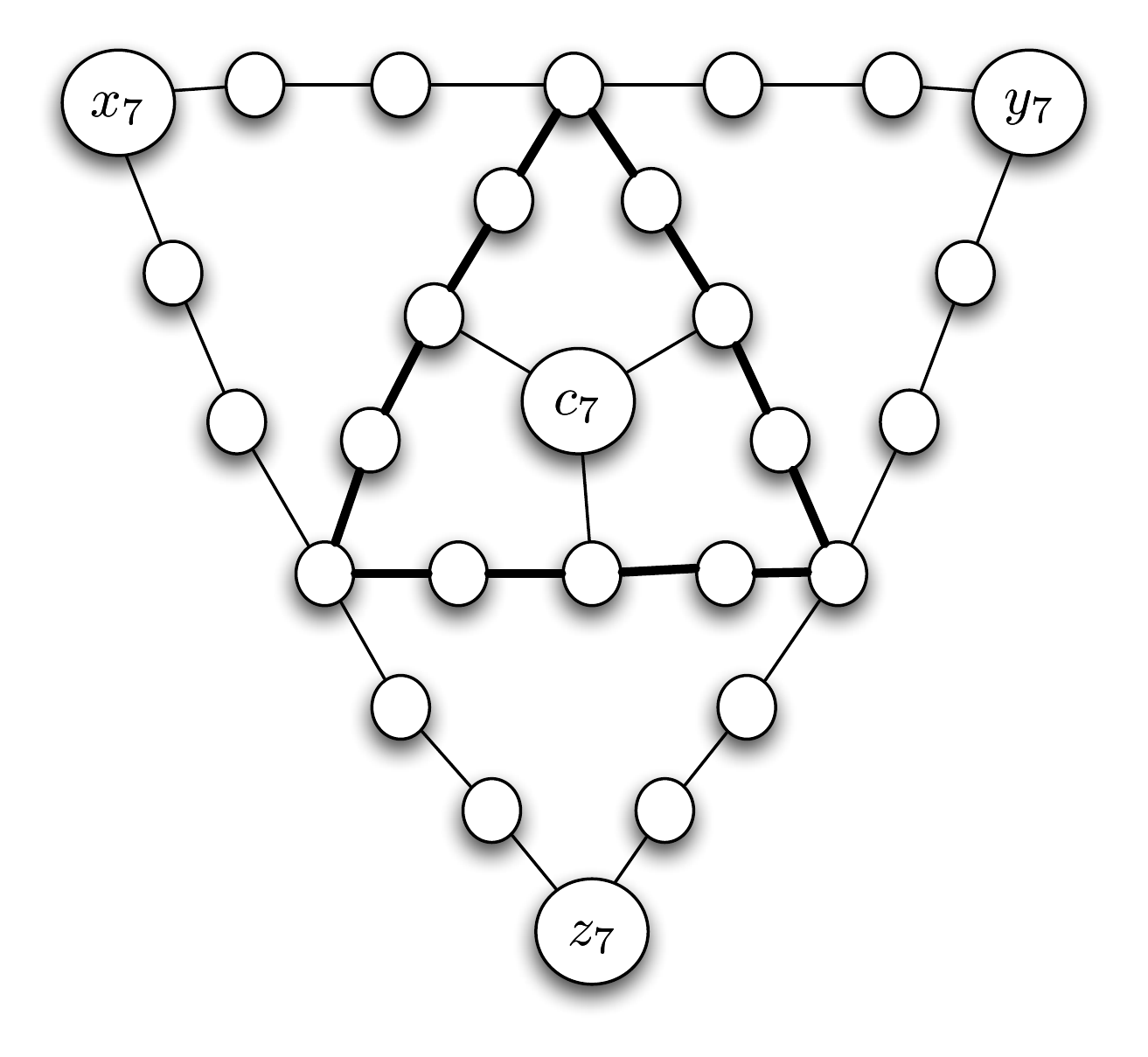}}
\end{center}
\caption{\label{fig-triangle}Example of $T_7$. Edges are bold only to better distinguish the different ``levels".}
\end{figure} 

\paragraph{\bf Preliminaries: the triangle gadget $T_{\gamma}$.}
Let us first describe a gadget subgraph, parameterized by an odd integer $\gamma$~\footnote{$\gamma$ is set odd only to avoid parity technicality in the computation of the distances.}, for which only $3$ vertices generate the whole graph. That is, we describe a graph $T_{\gamma}$ with size $\Theta(\gamma^2)$ such that there are $3$ vertices  (called the corners) whose minimum isometric hull is the whole graph. Moreover, some vertex (called the center)  of $T_{\gamma}$ is ``far" (at distance $\Theta(\gamma^2)$) from the corners.

Let $\gamma \in \mathbb{N}^*$ be any odd integer. Let us define recursively a $\gamma$-triangle with {\it corners} $\{x_{\gamma},y_{\gamma},z_{\gamma}\}$ and {\it center} $c_{\gamma}$ as follows.

A $3$-triangle $T_3$ is a $K_{1,3}$ where the big bipartition class $\{x_3,y_3,z_3\}$ are the corners and the center is the remaining vertex $c_3$.

Let $\gamma> 3$ be an odd integer and let $T_{\gamma-2}$ be a $(\gamma-2)$-triangle with corners $\{x_{\gamma-2},y_{\gamma-2},z_{\gamma-2}\}$ and center $c_{\gamma-2}$. The $\gamma$-triangle $T_{\gamma}$ is obtained as follows. First, let $U_{\gamma}$ be the cycle of length $3(\gamma-1)$ with vertices $x_{\gamma},y_{\gamma},z_{\gamma}$ that are pairwise at distance $\gamma-1$. For any $u,v \in \{x_{\gamma},y_{\gamma},z_{\gamma}\}$, let $a_{uv}$ be the vertex at distance $\lfloor \gamma/2 \rfloor$ from $u$ and $v$ in $U_{\gamma}$. The graph $T_{\gamma}$ is obtained from $U_{\gamma}$ and $T_{\gamma-2}$ by identifying $x_{\gamma-2},y_{\gamma-2},z_{\gamma-2}$ with $a_{x_{\gamma},y_{\gamma}}$, $a_{y_{\gamma},z_{\gamma}}$ and $a_{z_{\gamma},x_{\gamma}}$, respectively. The corners of $T_{\gamma}$ are $x_{\gamma},y_{\gamma}$ and $z_
\gamma$, and the center $c_{\gamma}$ of $T_{\gamma}$ is the center $c_{\gamma-2}$ of $T_{\gamma-2}$. Note that the center $c_{\gamma}$ of $T_{\gamma}$ is the center $c_3$ of the ``initial" ``triangle" $T_3$. An example is depicted on Figure~\ref{fig-triangle}.

The following claim can be easily proved by induction on $\gamma$. The second statement also comes from the fact that $T_{\gamma-2}$ is an isometric subgraph of $T_{\gamma}$. 

\begin{claim}
For any odd integer $\gamma>3$, let $T_{\gamma}$ with corners $S=\{x_{\gamma},y_{\gamma},z_{\gamma}\}$
\begin{itemize}
\item $|V(T_{\gamma})|= |V(T_{\gamma-2})|+ 3(\gamma-2) = \Theta(\gamma^2)$;
\item the (unique) isometric hull of $S$ is $T_{\gamma}$;
\item the distance between any two corners in $T_{\gamma}$ is $\gamma-1$;
\item the distance between the center and any corner in $T_{\gamma}$ is $\sum_{i=1}^{\lceil\frac{\gamma}{2}\rceil}i= \Theta(\gamma^2)$;
\item since $T_{\gamma}$ is planar and all faces are even, $T_{\gamma}$ is bipartite. 
\end{itemize}
\end{claim}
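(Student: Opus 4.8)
The plan is to prove this claim by induction on the odd integer $\gamma \geq 3$, establishing all five bullet points simultaneously since they are interdependent. The base case $\gamma = 3$ is immediate: $T_3 = K_{1,3}$ has $4$ vertices, the corners are pairwise at distance $2 = \gamma - 1$, the center is at distance $1 = \sum_{i=1}^{2} i$ from each corner (here $\lceil 3/2 \rceil = 2$, and $\sum_{i=1}^{2} i = 3$ — so I will need to check the exact indexing in the formula, it may be $\sum_{i=1}^{\lceil \gamma/2 \rceil - 1}$ or similar; the base case will pin down the correct convention), the isometric hull of the three leaves is all of $K_{1,3}$ since the center lies on every shortest path between two leaves, and $K_{1,3}$ is trivially bipartite.

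For the inductive step, assume the claim holds for $T_{\gamma-2}$. The key structural fact I would establish first is that $T_{\gamma-2}$ sits inside $T_{\gamma}$ as an isometric subgraph: the three identification vertices $a_{x_\gamma y_\gamma}, a_{y_\gamma z_\gamma}, a_{z_\gamma x_\gamma}$ on the outer cycle $U_\gamma$ become the corners of $T_{\gamma-2}$, and I must verify that a shortest path in $T_\gamma$ between two such identification points does not "escape" through the outer cycle — this amounts to comparing the distance $\gamma - 3$ (the distance between corners inside $T_{\gamma-2}$ by induction) against the distance along $U_\gamma$ between two of the $a$-vertices, which is $2\lfloor \gamma/2 \rfloor = \gamma - 1$ (since $\gamma$ is odd). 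Since $\gamma - 3 < \gamma - 1$, routing through $T_{\gamma-2}$ is strictly shorter, so $T_{\gamma-2}$ is isometric in $T_\gamma$ and the center's distance to a corner of $T_\gamma$ decomposes as (distance from $x_\gamma$ along $U_\gamma$ to the nearest $a$-vertex, which is $\lfloor \gamma/2 \rfloor$) plus (distance from that $a$-vertex, i.e. a corner of $T_{\gamma-2}$, to $c_{\gamma-2}$), giving the claimed telescoping sum. The vertex count is an immediate edge-counting computation: $U_\gamma$ contributes $3(\gamma-1)$ vertices but three are identified with corners of $T_{\gamma-2}$, yielding $|V(T_\gamma)| = |V(T_{\gamma-2})| + 3(\gamma-1) - 3 = |V(T_{\gamma-2})| + 3(\gamma-2)$, and solving the recurrence gives $\Theta(\gamma^2)$.

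The main obstacle — and the part deserving the most care — is the second bullet: that the unique isometric hull of the three corners $\{x_\gamma, y_\gamma, z_\gamma\}$ is all of $T_\gamma$. One direction (the whole graph is an isometric hull) is trivial. For the other direction, I would argue that any isometric subgraph $H$ containing the three corners must contain every vertex. The shortest paths between pairs of corners in $T_\gamma$: between $x_\gamma$ and $y_\gamma$ the distance is $\gamma - 1$, and there are exactly two geodesics — one going the "short way" along $U_\gamma$ (length $\gamma-1$) and one going through $a_{x_\gamma y_\gamma} = $ a corner of $T_{\gamma-2}$ and back. Actually the cleaner approach: first show $H$ must contain all three $a$-vertices (otherwise some corner-to-corner distance is violated), hence $H$ contains an isometric hull of the three corners of $T_{\gamma-2}$, which by induction is all of $T_{\gamma-2}$; then show $H$ must also contain the full outer cycle $U_\gamma$ by a geodesic-covering argument on each of the three arcs. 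I expect the arc-covering argument to require checking that for each edge of $U_\gamma$ there is a pair of corners whose unique shortest path (or every shortest path) uses that edge, which follows because the "short way" between two corners along $U_\gamma$ is a geodesic and any alternative route (through the interior) is strictly longer — this is exactly where the strict inequality $\gamma - 3 < \gamma - 1$ from the isometry step is reused. Finally, bipartiteness follows since $T_\gamma$ is planar with all faces of even length — $U_\gamma$ has length $3(\gamma-1)$ which is even (as $\gamma$ is odd), the inner faces inherit evenness from $T_{\gamma-2}$ by induction, and the faces created by the identification are even by a direct length count — so no odd cycle exists.
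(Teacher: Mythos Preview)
Your approach is essentially the same as the paper's: the paper's entire argument is the one-line remark ``easily proved by induction on $\gamma$; the second statement also comes from the fact that $T_{\gamma-2}$ is an isometric subgraph of $T_{\gamma}$,'' and you have simply fleshed out precisely that induction (isometry of $T_{\gamma-2}$ via the comparison $\gamma-3<\gamma-1$, the telescoping vertex count, forcing $U_{\gamma}$ and then the inner triangle into any isometric hull, and the even-face planarity for bipartiteness). Your flagged concern about the center-distance formula is legitimate: the recursion gives $\lfloor\gamma/2\rfloor$ added at each step, so the sum should read $\sum_{i=1}^{\lfloor\gamma/2\rfloor} i$ rather than $\lceil\gamma/2\rceil$, though of course this does not affect the $\Theta(\gamma^2)$ bound that the proof actually uses.
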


\begin{lem}\label{lem2:minIsomHull}
Given a bipartite $n$-node graph $G=(V,E)$, $X \subseteq V$, deciding whether there exists an isometric hull $I$ of $X$ with $|V(I)|<n$ is NP-complete.
\end{lem}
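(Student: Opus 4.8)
The plan is a reduction from 3-SAT, using the gadget $T_{\gamma}$ as a rigid ``skeleton''. Membership in NP is immediate: an isometric subgraph $I$ of $G$ with $X\subseteq V(I)$ and $|V(I)|<n$ is a polynomial-size certificate, and one checks in polynomial time, by a BFS from every vertex of $I$, that $\dist_I(u,v)=\dist_G(u,v)$ for all $u,v\in V(I)$.

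For the hardness I would start from a 3-CNF formula $\phi$ with variables $x_1,\dots,x_p$ and clauses $C_1,\dots,C_q$, and build a bipartite graph $G$ in which the minimum isometric hull of a carefully chosen set $X$ equals all of $V(G)$ exactly when $\phi$ is unsatisfiable; then a proper isometric hull (of size $<n$) exists iff $\phi$ is satisfiable. The construction glues three kinds of pieces: (i) a triangle gadget $T_{\gamma}$ with $\gamma$ odd and polynomial in $p+q$, whose three corners belong to $X$; by the Claim the corners already force all of $T_{\gamma}$ into any isometric hull, and the center sits at distance $\Theta(\gamma^2)$ from the corners, supplying a controlled stock of ``long'' distances; (ii) for each variable $x_i$ a ``choice'' gadget, modelled on the $4$-cycle example of the introduction, containing a true-vertex $t_i$ and a false-vertex $f_i$ of which at most one may be dropped from a minimum isometric hull, the dropped one encoding the truth value of $x_i$, with $t_i,f_i$ placed on shortest paths of the skeleton so that dropping one actually costs something; (iii) for each clause $C_j$ a gadget attached to the literal vertices occurring in $C_j$ and to the skeleton, whose purpose is to make the dropping of a choice vertex possible only when the resulting assignment satisfies every clause. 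All paths are given even lengths and the pieces are joined so that $G$ is bipartite (using that $T_{\gamma}$ is bipartite); $X$ is $V(G)$ minus the choice vertices $\{t_i,f_i\}$ (plus, if convenient, a few terminal vertices).

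Correctness then splits in the usual way. If $\phi$ is satisfiable, fix a satisfying assignment, drop the choice vertices it selects, and verify — using the explicit distances of the Claim for the skeleton and the local structure of the clause gadgets — that the remaining graph is still isometric, giving an isometric hull of size $<n$. Conversely, if some isometric hull $I$ of $X$ has $|V(I)|<n$, then $I$ is $G$ with at least one vertex removed; since every vertex outside the choice gadgets is forced into $I$ (inside $T_{\gamma}$, or on the unique shortest path between two vertices already in $X$), the removed vertices lie among the $t_i,f_i$, and the clause gadgets force the induced partial assignment to extend to a global satisfying one, whence $\phi$ is satisfiable.

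The main obstacle is the clause gadget of step (iii): a metric cannot directly certify ``$t_i$ is the vertex you picked'', so one must calibrate the lengths so that a clause gadget can be traversed within its distance budget only by re-using a literal vertex that the skeleton's shortest paths already pass through — equivalently, a literal whose choice vertex was dropped consistently with that literal being true — while, if no literal of a clause is made true, deleting any choice vertex strictly increases the distance of some pair. Making these lengths mutually consistent, keeping $G$ bipartite, and simultaneously guaranteeing that in the unsatisfiable case \emph{every} vertex of $G$ is indispensable (so that $X$ is a hull set), is the technical heart of the proof; the rigidity and the closed-form distances of $T_{\gamma}$ are precisely what make this bookkeeping tractable, and they are also what will later allow compressing $X$ down to three vertices in Theorem~\ref{theo:minIsomHull}.
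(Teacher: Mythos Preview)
Your plan has a genuine gap: you never construct the clause gadget, and you yourself flag it as ``the technical heart of the proof''. Without it, neither direction of the correctness argument goes through. More to the point, you have deployed $T_\gamma$ in the wrong role. You use it as a passive skeleton whose corners sit in $X$ and whose only job is to supply long distances, and then you go looking for a separate clause device. In the paper's proof $T_\gamma$ \emph{is} the clause gadget. Its defining property --- any two corners are joined by a single path of length $\gamma-1$, but all three corners together force the whole $\Theta(\gamma^2)$-vertex triangle --- is precisely the ``at most two literals present is cheap, three literals present is catastrophic'' mechanism you are searching for in step~(iii).

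Concretely, the paper's construction is almost dual to yours. The input set $S$ is \emph{small}, namely $\{r\}\cup\{d_i,g_i\mid 1\le i\le n\}$; each variable gadget is a $4\alpha$-cycle whose two $d_i$--$g_i$ geodesics $P^i,N^i$ encode the two truth values, and any isometric hull of $S$ must contain at least one of them. Each clause $C_j$ gets its own copy of $T_\gamma$ with corners identified with the appropriate $n_i$ or $p_i$, so a corner lands in the hull exactly when the chosen path fails to satisfy $C_j$ through that literal. A satisfying assignment keeps every clause gadget down to a single $(\gamma-1)$-path; an unsatisfied clause drags in a full $T_\gamma$. One extra vertex $q$ adjacent to all centres $c^j$ then propagates a single full clause gadget to the whole graph, giving the ``$|V(I)|<n$ iff satisfiable'' dichotomy. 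Note also that with your choice $X=V(G)\setminus\{t_i,f_i\}$ the statement in your point~(i) about the corners forcing $T_\gamma$ is vacuous, since all of $T_\gamma$ is already in $X$; this is another symptom of the gadget being under-used.
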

\begin{proof}
The problem is clearly in NP since testing whether a subgraph is isometric can be done in polynomial-time. To prove that the problem is NP-hard, let us present a reduction from $3$-SAT. 

Let $\Phi$ be a CNF formula with $n$ variables $v_1,\cdots,v_n$ and $m$ clauses $C_1,\cdots,C_m$. Let us describe a graph $G_0=(V,E)$, $S\subseteq V$ and $k\in \mathbb{N}$ such that an isometric hull of $S$ has size at most $k$ if and only if $\Phi$ is satisfiable.

\noindent Let $\alpha,\beta$ and $\gamma$ be three integers satisfying: $\alpha$ and $ \beta$ are even and $\gamma$ is odd and
$$m<<2\alpha<2\beta<\gamma<2(\alpha+\beta).$$

The graph $G_0$ is built by combining some variable-gadgets, clause-gadgets and adding some paths connecting the variable-gadgets with some particular vertex $r$.

\smallskip

\noindent{\bf Variable-gadget.} For any $1\leq i \leq n$, the variable-gadget $V^i$ consists of a cycle of length $4 \alpha$ with four particular vertices $d_i,n_i,p_i,g_i$ such that $d_i$ and $g_i$ are antipodal, i.e., at distance $2 \alpha$ of each other, $n_i$ and $p_i$ are antipodal, and $\dist_{V^i}(d_i,n_i)=\dist_{V^i}(d_i,p_i)=\dist_{V^i}(g_i,n_i)=\dist_{V^i}(g_i,p_i)=\alpha$. Let $P^i$ and $N^i$ be the shortest path between $d_i$ and $g_i$ in $V^i$ passing through $p_i$ and $n_i$, respectively. 


\noindent{\bf Clause-gadget.} For any $1\leq j \leq m$ and clause $C_j=(\ell_i \wedge \ell_k \vee \ell_h)$ (where $\ell_i$ is the literal corresponding to variable $v_i$ in clause $C_j$), the clause-gadget $C^j$ is a $\gamma$-triangle with corners denoted by $\ell_i,\ell_k,\ell_h$ (abusing the notation, we identify the corner-vertices and the literals they correspond to) and center denoted by $c^j$.


\noindent{\bf The graph $G_0$.} The graph $G_0$ is obtained as follows. First, let us start with disjoint copies of $V^i$, for $1\leq i \leq n$, and of $C^j$, for $1 \leq j \leq m$. Then, add one vertex $r$ and, for any $1\leq i \leq n$, add a path $P(r,d_i)$ of length $\beta$ between $r$ and $d_i$ and a path $P(r,g_i)$ of length $\beta$ between $r$ and $g_i$ (these $2n$ paths are vertex-disjoint except in $r$). Finally, for any $1\leq j \leq m$ and any literal $\ell_i$ in the clause $C_j$, let us identify the corner $\ell_i$ of $C^j$ with vertex $p_i$ (in the variable-gadget $V^i$) if variable $v_i$ appears negatively in $C_j$ (i.e., if $\ell_i=\bar{v}_i$) and identify the corner $\ell_i$ of $C^j$ with vertex $n_i$ if variable $v_i$ appears positively in $C_j$ (i.e., if $\ell_i=v_i$). Let us emphasize that, if variable $v_i$ appears positively (negatively) in $C_j$, then a corner of $C^j$ is identified with a vertex of the path $N^i$ ($P^i$). By the parity of $\alpha,\beta$ and $\gamma$, $G_0$ is clearly bipartite.
An example is depicted in Figure~\ref{fig-reduction}.

\begin{figure}[t!]
\begin{center}
\scalebox{0.4}{\includegraphics{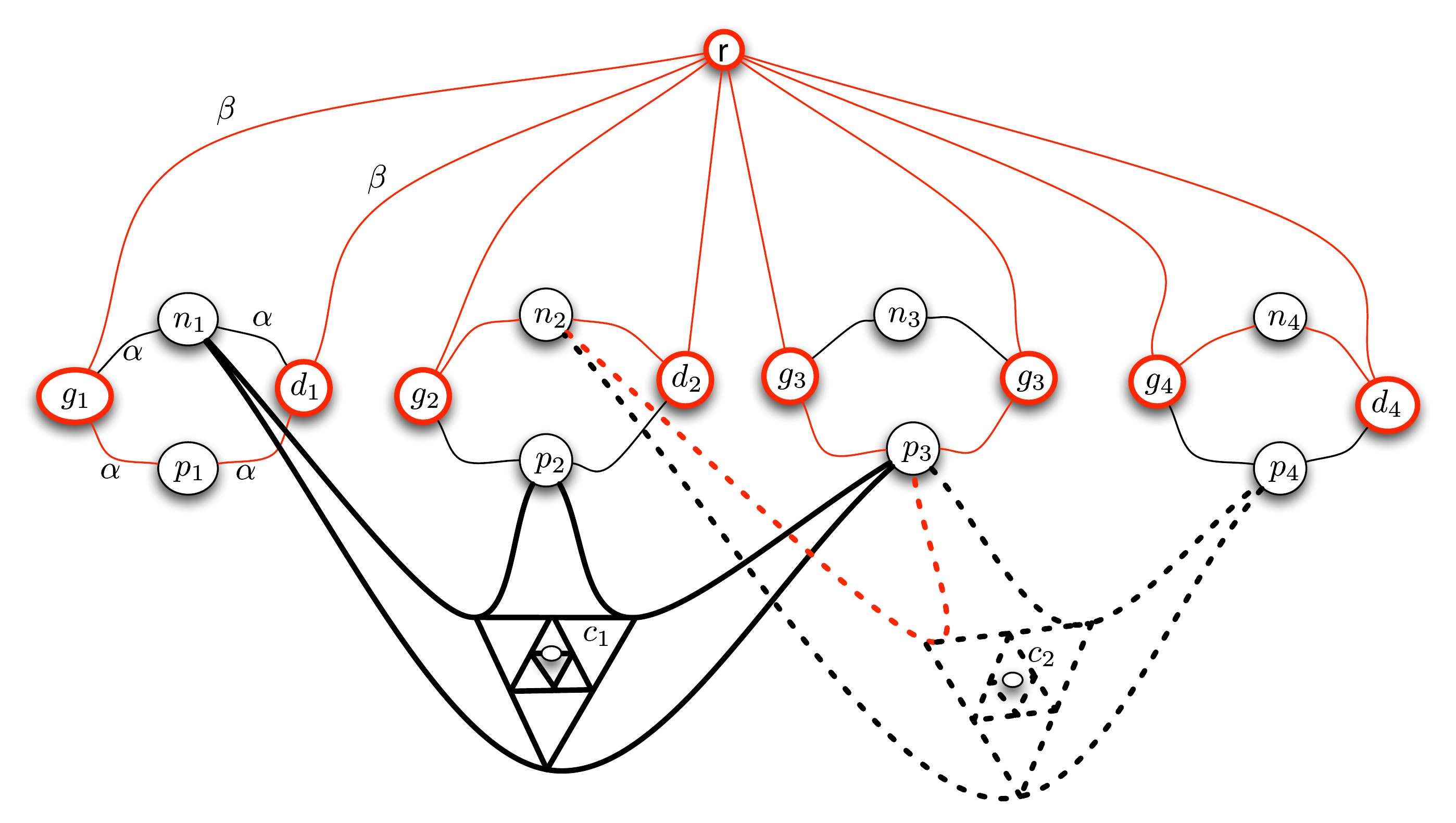}}
\end{center}
\caption{\label{fig-reduction} An example for the graph $G_0$ of the reduction of Lemma~\ref{lem:minIsomHull} for $\Phi$ with variables $v_1,v_2,v_3,v_4$ and two clauses $C_1=v_1 \vee \bar{v}_2 \vee \bar{v}_3$ and $C_2=v_2 \vee \bar{v}_3 \vee \bar{v}_4$. The solid bold lines represent the clause-gadget $C^1$ which is a $\gamma$-triangle (only few ``levels" are depicted), and the dotted bold lines represent the clause-gadget $C^2$. The vertices $c_1$ and $c_2$ denote the centers of $C^1$ and $C^2$ respectively. Red vertices are the ones of the set $S=\{r,d_1,g_1,d_2,g_2,d_3,g_3,d_4,g_4\}$. Finally, the red subgraph is the isometric hull of $S$ corresponding to the truth assignment $(v_1,v_2,v_3,v_4)=(1,0,1,0)$.
The graph $G$ is obtained from $G_0$ by adding a vertex $q$ adjacent to $c_1$ and $c_2$.}
\end{figure} 

\smallskip

\noindent{\bf The set $S$.} Finally, let $S= \{r\} \cup \{d_i,g_i \mid 1 \leq i \leq n\}$. 


We first show that $S$ has an isometric hull of size at most $k:=n (\alpha + 2 \beta)+m\gamma$ in $G_0$ if and only if $\Phi$ is satisfiable. 

\begin{claim}
$S$ has an isometric hull of size at most $k:=n (\alpha + 2 \beta)+m\gamma$ in $G_0$ if and only if $\Phi$ is satisfiable. 
\end{claim}
\begin{proofclaim}


Let us start with some simple observations (following from the constraints on $\alpha,\beta$ and $\gamma$):
\begin{enumerate}
\item For any $1\leq i \leq n$, $\dist_G(d_i,g_i)=\dist_G(p_i,n_i)=2 \alpha$ and there are exactly two shortest paths $P^i$ and $N^i$ between $d_i$ and $g_i$. Intuitively, choosing $P^i$ (resp., $N^i$) in the isometric hull will correspond to a positive (resp., negative) assignment of variable $v_i$.
\item For any $1\leq i \leq n$, $\dist_G(r,d_i)=\dist_G(r,g_i)=\beta$ and $P(r,d_i)$ (resp., $P(r,g_i)$) is the unique shortest path between $r$ and $d_i$ (resp., between $r$ and $g_i$). In particular, each of these paths has to be in any isometric hull of $S$.
\item For any $1\leq i<j \leq n$, $\dist_G(d_j,d_i)=\dist_G(d_j,g_i)=2\beta$ and the unique shortest path between them is the one going through $r$ (because $2\beta<\gamma$).
\item For any $1 \leq j \leq m$ and clause  $C_j=(\ell_i \vee \ell_k \vee \ell_h)$, $\dist_G(\bar{\ell}_i,\bar{\ell}_h)=\dist_G(\bar{\ell}_h,\bar{\ell}_k)=\dist_G(\bar{\ell}_i,\bar{\ell}_k)=\gamma-1$ (where $\bar{\ell}_i$ denotes $n_i$ if $v_i$ appears positively in $C_j$ and it denotes $p_i$ otherwise). This is because $\gamma < 2(\alpha + \beta)$ and the unique shortest path between these vertices is the one in $C^j$.
\item  For any $1\leq h<k\leq n$, $\ell_h \in \{n_h,p_h\}$ and $\ell_k \in  \{n_k,p_k\}$ such that literals $\bar{\ell}_h$ and $\bar{\ell}_k$ do not appear in a same clause, then $\dist_G(\ell_h, \ell_k) = 2(\alpha + \beta)$ (because $2 \beta < \gamma$). In particular, every shortest path between $\ell_h$ and $\ell_k$ does not cross any clause-gadget.
\item Let $1\leq h<k\leq n$, $\ell_h \in \{n_h,p_h\}$ and $\ell_k \in  \{n_k,p_k\}$ appearing in a clause $C_j$. For any vertex $u$ in the shortest path between $\ell_h$ and $\ell_k$ in the clause-gadget $C^j$, and for any $v \in  \{d_i,g_i\}$ for some $i\notin \{h,k\}$, $\dist_G(u,v) \leq \gamma/2+\alpha+2 \beta$. In particular, any shortest path between $u$ and $v$ does not pass through the third corner (different from $\ell_h$ and $\ell_k$) of $C^j$. This is because $ \gamma  > 2 \beta$. 
\end{enumerate}

\begin{itemize}
\item
First, let us show that, if $\Phi$ is satisfiable, there is an isometric hull of $S$ with size at most $n (\alpha + 2 \beta)+m\gamma$ in $G_0$.
Indeed, consider a truth assignment of $\Phi$ and let $H$ be the subgraph defined as follows. For any $1\leq i \leq n$, the paths $P(r,d_i)$ and $P(r,g_i)$ belong to $H$. For any  $1\leq i \leq n$, if $v_i$ is assigned to True, add $P^i$ in $H$, and add $N^i$ otherwise. Finally, for any $1\leq j\leq m$, for any two corners of the clause-gadget $C^j$, if these two corners are in $H$, then add to $H$ the path of length $\gamma-1$ between them in $C^j$.

Clearly, $H$ contains all vertices in $S$. To show that $H$ is isometric, let us first show that any clause-gadget has at most two corners in $H$. Let $x \in \{n_i,p_i\}$ be a corner of a clause-gadget $C^j$ which is in $H$. If $x=n_i$ (resp., $x=p_i$) is in $H$, it implies that the path $N^i$ (resp., $P^i$) has been added in $H$. Therefore, the variable $v_i$ is assigned to False (resp., to True) in the assignment. On the other hand, if $x=n_i$ (resp., $x=p_i$) is a corner of $C^j$, it means that the variable $v_i$ appears positively (resp., negatively) in clause $C_j$. Altogether, this implies that, in the assignment, Variable $v_i$ cannot satisfy clause $C_j$. Since the assignment satisfies $\Phi$, each clause must be satisfied by at least one of its variables, which implies that at least one of its corners is not in $H$. 

To sum-up $H$ consists of the $2n$ paths from $r$ to the vertices $d_i,g_i$, $1\leq i \leq m$, of exactly on path $P^i$ or $N^i$, $1\leq i \leq m$, and of at most one path between two corners of $C^j$, $1\leq j \leq m$. Hence, $H$ has at most $n (\alpha + 2 \beta)+m\gamma$ vertices. The fact that $H$ is isometric comes from the above observations on the shortest paths in $G_0$.
\item
To conclude, let us show that, if $\Phi$ is not satisfiable, then any isometric hull of $S$, in $G_0$, has size at least $n (\alpha + 2 \beta)+\Omega(\gamma^2)$, i.e., strictly larger than  $n (\alpha + 2 \beta)+m\gamma$ (since $\gamma >>m$). 

As already mentioned, any isometric hull of $S$ has to contain each of the paths $P(r,d_i)$ and $P(r,g_i)$ and at least one of the paths $P^i$ and $N^i$, $1\leq i \leq n$. This consists of at least $n (\alpha + 2 \beta)$ vertices. It remains to show that, for any isometric hull $H$ of $S$, there exists $j\leq m$ such that the entire clause-gadget $C^j$ belongs to $H$. This will consist of $\Omega(\gamma^2)$ additional vertices. 

Let $H$ be an isometric hull of $S$. For any $1 \leq i \leq n$, at least $P^i$ or $N^i$ belongs to $H$. If $P^i$ belongs to $H$, assign variable $v_i$ to True and assign it to False otherwise. Since $\Phi$ is not satisfiable, there is a clause $C_j=(\ell_i \vee \ell_k \vee \ell_h)$ that is not satisfied. Let $u \in \{i,h,k\}$. If $v_u$ appears positively (resp., negatively) in $C_j$, then $v_u$ is assigned to False (resp., to True) since $C_j$ is not satisfied. Moreover, it implies that $P^u$ (resp., $N^u$) belongs to $H$. By construction, the corner $\ell_u$ of $C^j$ belongs to $P^u$ (resp., $N^u$) and so, $\ell_u$ belongs to $H$. Hence, all the three corners of $C^j$ belong to $H$ and it is easy to see that the entire $C^j$ must belong to $H$ since, recursively, all paths in $C^j$ have to be added to preserve the fact that $H$ is isometric.
\end{itemize}
\end{proofclaim}

\noindent To prove Lemma~\ref{lem2:minIsomHull}, from $G_0$, let us build a graph $G$ such that $S$ (which remains unchanged) has an isometric hull of size at most $|V(G)|-1$ if and only if $\Phi$ is satisfiable. The graph $G$ is obtained from $G_0$ by adding to it a gadget (one vertex) that will ensure that if the center of one clause-gadget belongs to an isometric hull (recall that, in the first part of the proof, this is the case if and only if $\Phi$ is not satisfiable), then all vertices of the graph will have to be in the isometric hull. 

Let us add to $G_0$ one vertex $q$ adjacent to all the centers of the clause-gadgets in $G_0$. Note that the obtained graph $G$ is bipartite. 
\begin{itemize}
\item
If $\Phi$ is satisfiable, consider any truth assignment and let $H$ be the subgraph (as defined in the previous proof) that consists of the $2n$ paths from $r$ to the vertices $d_i,g_i$, $1\leq i \leq m$, of exactly on path $P^i$ or $N^i$, $1\leq i \leq m$, and of at most one path between two corners of $C^j$, $1\leq j \leq m$. Because each center of a clause-gadget is at distance $\Omega(\gamma^2)$ from any vertex of $H$, the addition of vertex $q$ in the graph has not modified the distances between vertices in $H$. Therefore, $H$ is isometric (as in the first part of the proof) and $S$ is not a hull set of $G$.
\item
If $\Phi$ is not satisfiable, then we prove that $S$ is a hull set of $G$. 

If $\Phi$ is not satisfiable, then it can be shown as previously that any isometric hull $H$ of $S$ contains at least one of the path $P^i$ or $N^i$ for any $1\leq i \leq n$, and an entire clause-gadget $C^j$ for some $j\leq m$. In particular, the center $c$ of $C^j$ belongs to $H$. Now, let $1\leq i \leq n$ and let us assume that $N^i$ is not in $H$. Note that, in this case, $P^i$ must be in $H$. Therefore $p_i \in V(H)$ and $n_i \notin V(H)$. By assumption, there is a clause $C_z$ that contains $v_i$ positively and does not contain $v_i$ negatively. By construction, the clause-gadget $C^z$ has $n_i$ as a corner and $p_i$ is not a corner of $C^z$. Note that $z\neq j$ since all corners of $C^j$ belong to $H$. Now, any shortest path between $c$ and $d_i$ must go from $c$ to $q$ then to the center of the clause-gadget $C_z$ and then through $n_i$ to $d_i$. In particular, $n_i$ must be added to the isometric hull. It can be proved similarly that if $P^i$ does not belong to $H$, then $p_i$ has to be included into $H$. 

Altogether, we just proved that, for any $1\leq i \leq n$, both $n_i$ and $p_i$ belong to $H$. It is easy to conclude that $H=G$. Indeed, in particular, any clause-gadget has all its corner in $H$ and therefore, the entire clause-gadget must be included in $H$.
\end{itemize}

\end{proof}

Finally, to prove Th.~\ref{theo:minIsomHull}, we will reduce the problems that we proved NP-complete in Lemma~\ref{lem2:minIsomHull} to the same problems in the case $|S|=3$. Note that, in both reductions of Lemmas~\ref{lem:minIsomHull} and~\ref{lem2:minIsomHull}, the distance between any pair of vertices of $S$ is even, so both problems are NP-complete with this extra constraint. 

\begin{proof}{\bf of Th.~\ref{theo:minIsomHull}.}
Let $G,S,k$ be an instance of the problem of finding an isometric hull of $S$ with size at most $k$. Let $n=|V(G)|$ and let $S=\{u_1,\cdots,u_s\}$. Moreover, let us assume that the distance between any pair of vertices of $S$ is even. 

Let $G'$ be obtained as follows. Start with a copy of $G$, a path $P=(x=v_0,v_1,w_1,v_2,w_2,\cdots$ $,w_{s-1},v_s,$ $v_{s+1}=y\}$ and a vertex $z$. Let $n'=n$ if $n$ even and $n'=n+1$ otherwise. For any $1\leq i \leq s$, add a path of length $n'$ between $v_i$ and $u_i$ and add a path of length $n'$ between $z$ and $u_i$. Note that $G$ is an isometric subgraph of $G'$ and that $G'$ is bipartite.  Finally, let $S'=\{x,y,z\}$. 

Any isometric hull $H$ of $S'$ has to contain the (unique) shortest path $P$ between $x$ and $y$. Hence, for any $1 \leq i \leq s$, $H$ contains $v_i$ and therefore must contain the (unique) shortest path $P_i$ between $v_i$ and $z$ (of length $2n'$). In particular $H$ contains $u_i$ for any $1 \leq i \leq s$. Since $G$ is isometric in $G'$, then the subgraph induced by the vertices in $V(G) \cap V(H)$ is an isometric hull of $S$ in $G$. 

Therefore, $S$ admits an isometric hull of size at most $k$ in $G$ if and only if $S'$ admits an isometric hull of size $k+|V(G')\setminus V(G)|= k+2sn'+s+1$. In particular, if $k=n-1$, then the formula gives $|V(G')|-1$.
\end{proof}

Note that deciding whether a set $S$ of vertices is not a hull set of an $n$-node graph is equivalent to decide whether $S$ has an isometric hull of size $<n$. Therefore:
\begin{cor}
Deciding whether a set of vertices is a hull set is coNP-complete.
\end{cor}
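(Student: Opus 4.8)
The plan is to derive this corollary directly from the NP-completeness already established in Lemma~\ref{lem2:minIsomHull} (equivalently, the $k=n-1$ case of Theorem~\ref{theo:minIsomHull}) by a complementation argument, using the equivalence noted just above the statement. By definition, $S$ is a hull set of an $n$-node graph $G$ exactly when $G$ is the (unique) minimum isometric hull of $S$, and this fails precisely when $S$ admits some isometric hull $H$ with $|V(H)|<n$. Thus the complement of the hull-set problem is exactly the decision problem ``does $S$ have an isometric hull of size $<n$?''.

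First I would argue membership in coNP by showing that the complement problem lies in NP. A short certificate for ``$S$ is not a hull set'' is simply a vertex subset $V'$ with $S\subseteq V'\subsetneq V$: a verifier computes all pairwise distances in $G$ and in the induced subgraph $G[V']$ (for instance by running a BFS from each vertex) and checks both that $\dist_{G[V']}(u,v)=\dist_G(u,v)$ for all $u,v\in V'$, confirming that $G[V']$ is isometric, and that $|V'|<n$. This verification runs in polynomial time, so the complement is in NP and hence the hull-set problem is in coNP.

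For coNP-hardness I would invoke Lemma~\ref{lem2:minIsomHull}, which shows that deciding whether a set $X$ has an isometric hull of size $<n$ is NP-complete (already on bipartite graphs). By the equivalence above, this problem is exactly the complement of the hull-set problem, so the latter is coNP-hard; combined with membership, it is coNP-complete.

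I expect no genuine obstacle here, since the entire difficulty is carried by the preceding reduction: this corollary is essentially a restatement of the ``in particular'' clause of Theorem~\ref{theo:minIsomHull}, now phrased for the dual problem. The only points requiring care are the precise reading of an \emph{isometric hull} as a vertex-induced isometric subgraph, which is what makes a plain vertex set a valid certificate, and the observation that testing isometricity reduces to a polynomial-time comparison of all-pairs distances.
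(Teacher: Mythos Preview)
Your proposal is correct and follows exactly the same approach as the paper: the paper simply observes, in the sentence preceding the corollary, that ``$S$ is not a hull set'' is equivalent to ``$S$ has an isometric hull of size $<n$'', and then says ``Therefore''. Your write-up is more explicit than the paper's (you spell out the NP certificate for the complement and note that one may pass to the induced subgraph $G[V']$), but the underlying argument is identical.
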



Finally, to prove Theorem~\ref{theo:minHullSet}, we present a reduction from the problem of satisfiability for quantified Boolean formulas with $2$ alternations of quantifiers $QSAT_2$. The reduction is an adaptation of the one presented in the proof of Theorem~\ref{theo:minIsomHull}. 

In the proof below, we will use the following easy claim to force some vertices to belong to any hull set.

\begin{claim}
For any graph $G=(V,E)$ and any vertex $v\in V$ such that $G \setminus v$ is isometric, we have that $v$ has to belong to any hull set of $G$. 
In particular, any one-degree vertex of $G$ has to belong to any hull set of $G$.
\end{claim}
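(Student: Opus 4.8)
The plan is to prove both statements of the claim directly from the definition of a hull set, which (as recalled in this section) is a set $S \subseteq V$ such that $G$ itself is the unique minimum isometric hull of $S$.

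First I would handle the general statement. Let $v \in V$ be such that $G \setminus v$ is isometric, and let $S$ be any hull set of $G$. Suppose for contradiction that $v \notin S$. Then $S \subseteq V(G \setminus v)$, and since $G \setminus v$ is an isometric subgraph of $G$ containing $S$, it is an isometric hull of $S$ with strictly fewer vertices than $G$. Hence $G$ is not a minimum isometric hull of $S$, contradicting the assumption that $S$ is a hull set of $G$. Therefore $v \in S$, i.e., $v$ belongs to every hull set of $G$.

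For the ``in particular'' statement, I would observe that if $v$ is a vertex of degree one in $G$, with unique neighbour $w$, then $G \setminus v$ is isometric: for any two vertices $u, u' \in V(G \setminus v)$, no shortest $(u,u')$-path in $G$ can use $v$, since a path through the degree-one vertex $v$ would have to enter and leave $v$ via $w$, contradicting that it is a path (or, more simply, $v$ has no shortest path through it as an internal vertex because it has only one neighbour). Thus $\dist_{G \setminus v}(u,u') = \dist_G(u,u')$ for all such $u,u'$, so $G \setminus v$ is an isometric subgraph of $G$, and the first part applies. This second step is essentially immediate and needs no real obstacle; the whole claim is a short observation whose only subtlety is making sure one uses the right notion of ``minimum'' isometric hull, and that the condition ``$G \setminus v$ is isometric'' is exactly what rules out $G$ being a smallest isometric hull of any set avoiding $v$.
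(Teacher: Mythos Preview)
Your proof is correct. The paper in fact states this claim without proof, calling it an ``easy claim'', so your argument is exactly the natural direct verification the paper omits.
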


\begin{proof}{\bf of Theorem~\ref{theo:minHullSet}.}
First, the problem is in $\Sigma_2$. Indeed, by Theorem~\ref{theo:minIsomHull}, a certificate $S$ (i.e., a set of vertices which is supposed to be a hull set of $G$) can be checked using an NP oracle. 

To prove that it is hard for $\Sigma_2$, let us give a reduction from $QSAT_2$ where the input is a Boolean formula $\Phi$ on two sets $X=\{x_1,\cdots,x_{n_x}\}$ and $Y=\{y_1,\cdots,y_{n_y}\}$ of variables and the question is to decide whether $\exists X, \forall Y, \Phi(X,Y)$. We moreover may assume that $\Phi$ is $3$-DNF formula, i.e., the disjunction of conjunctive clauses $C_1,\cdots,C_m$ with $3$ variables each. We also assume that, for each variable, some clause contains it positively and some clause contains it negatively, and that no variable appears positively and negatively in some clause.

Let us describe a graph $G=(V,E)$ and $k\in \mathbb{N}$ such that there exists a hull set $S$ of size at most $k$ if and only if $\exists X, \forall Y, \Phi(X,Y)$.

Let $\alpha,\beta$ and $\gamma$ be three integers satisfying: $\alpha$ and $ \beta$ are even and $\gamma$ is odd and
$$m<< 2\alpha<2\beta<\gamma<2(\alpha+\beta).$$

The graph $G$ is built by combining some variable-gadgets, clause-gadgets and adding some paths connecting the variable-gadgets with some particular vertex $r$. We emphasize the differences with the graph proposed in previous subsection. 

\smallskip

\noindent{\bf Variable-gadget.} For any $1\leq i \leq n_y$, the variable-gadget $Y^i$ consists of a cycle of length $4 \alpha$ with four particular vertices $d^y_i,n^y_i,p^y_i,g^y_i$ such that $d^y_i$ and $g^y_i$ are antipodal, i.e., at distance $2 \alpha$, $n^y_i$ and $p^y_i$ are antipodal, and $\dist_{Y^i}(d^y_i,n^y_i)=\dist_{Y^i}(d^y_i,p^y_i)=\dist_{Y^i}(g^y_i,n^y_i)=\dist_{Y^i}(g^y_i,p^y_i)=\alpha$.  Let $P^i_y$ (resp., $N^i_y$) be the shortest path between $d^y_i$ and $g^y_i$ in $Y^i$ passing through $p^y_i$ and $n^y_i$, respectively. 

Moreover, let us add a one-degree vertex $dd^y_i$ adjacent to $d^y_i$ and a one-degree vertex $gg^y_i$ adjacent to $g^y_i$ (This is the first difference with the previous section). By the above claim both vertices $dd^y_i$ and $gg^y_i$ have to belong to any hull set of $G$.

For any $1\leq i \leq n_x$, the variable-gadget $X^i$, the vertices $d^x_i,n^x_i,p^x_i,g^x_i,dd^x_i,gg^x_i$ and the paths $P^i_x$ and $N^i_x$ are defined similarly.


\noindent{\bf Clause-gadget.} For any $1\leq j \leq m$ and clause $C_j=(\ell_i \wedge \ell_k \wedge \ell_h)$, the clause-gadget $C^j$ is a $\gamma$-triangle with corners denoted by $\ell_i,\ell_k,\ell_h$ (abusing the notation, we identify the corner-vertices and the literals they correspond to) and center denoted by $c^j$.


\noindent{\bf The graph $G$.} The graph $G$ is obtained as follows. First, let us start with disjoint copies of $X^i$, for $1\leq i \leq n_x$, of $Y^i$ for $1\leq i \leq n_y$, and of $C^j$, for $1 \leq j \leq m$. Then, add one vertex $r$ and, for any $1\leq i \leq n_x$, add a path $P(r,d^x_i)$ of length $\beta$ between $r$ and $d^x_i$ and a path $P(r,g^x_i)$ of length $\beta$ between $r$ and $g^x_i$ (these $2n_x$ paths are vertex-disjoint except in $r$). Similarly, for any $1\leq i \leq n_y$, add a path $P(r,d^y_i)$ of length $\beta$ between $r$ and $d^y_i$ and a path $P(r,g^y_i)$ of length $\beta$ between $r$ and $g^y_i$ (these $2n_y$ paths are vertex-disjoint except in $r$).

Then, add a one-degree vertex $r'$ adjacent to $r$ (This is another difference with the previous section). Again, by the above claim, vertex $r'$ has to belong to any hull set of $G$.

A main difference with the construction in the previous section is the way the clause-gadgets are connected to the variable-gadgets. Intuitively, this is because we consider now a DNF formula while previously it was a CNF formula. 

For any $1\leq j \leq m$ and any literal $\ell_i$ in the clause $C_j$ (corresponding to some variable $v_i \in X\cup Y$), let us identify the corner $\ell_i$ of $C^j$ with vertex $p_i$ (in the variable-gadget of variable $v_i$) if variable $v_i$ appears positively in $C_j$ and identify the corner $\ell_i$ of $C^j$ with vertex $n_i$ if variable $v_i$ appears negatively in $C_j$. Let us emphasis that, contrary to the previous section, if variable $v_i$ appears positively (resp., negatively) in $C_j$, then a corner of $C^j$ is identified with a vertex of the path $P^i$ (resp., $N^i$). 

Finally, add a vertex $q$ adjacent to all centers of the clause-gadgets.


\noindent{\bf The last touch.} Let $\delta$ be any odd integer larger than the diameter of the graph built so far. For any $1\leq i \leq n_x$, let us add a path $H^i$ of length $\delta$ between $p^x_i$ and $n^x_i$. 

The key point is that any hull set of $G$ has to contain at least one internal vertex of each path $H^i$. Indeed, by the choice of $\delta$, for any $1\leq i \leq n_x$, the graph obtained from $G$ by removing the internal vertices of $H^i$ is isometric in $G$. 

Another important remark is that, since $\delta$ is odd, each vertex in $H^i$ is either closer to $p^x_i$ than to $n^x_i$ or vice-versa (no vertex is at equal distance from both). For any $1\leq i \leq n_x$, let $\{h^p_i,h^n_i\}$ be the middle edge of $H^i$ where $h^p_i$ is closer than $p^x_i$ and $h^n_i$ is closer than $n^x_i$

\smallskip

As we have already said, any hull set of $G$ must contain all vertices in $I=\{dd^x_i,gg^x_idd^y_j,gg^y_j \mid 1\leq i \leq n_x, 1\leq j \leq n_y\} \cup \{r'\}$ and at least one internal vertex in $H^i$ for each $1 \leq i \leq n_x$. That is, any hull set of $G$ has at least $3n_x+2n_y+1$ vertices.

We show that $G$ has a hull set of size $3n_x+2n_y+1$ if and only if $\exists X, \forall Y, \Phi(X,Y)$.

\begin{itemize}
\item 
First, assume that there exists an assignment $X^*$ of $X$ such that every assignment of $Y$ satisfies $\Phi(X,Y)$. For any $1\leq i \leq n_x$, let $s_i$ denote the vertex $h^p_i$ if variable $x_i$ is set to True, and $s_i$ denote $h^n_i$ otherwise.

We prove that $S=I \cup \{s_1,\cdots,s_{n_x}\}$ is a hull set of $G$, i.e., $G$ is the unique isometric hull of $S$. 

If $s_i = h^p_i$ then the path $P^i_x$ and the shortest path from $p^x_i$ to $h^p_i$ (i.e., the subpath of $H^i$) must belong to any isometric hull of $S$. Symmetrically, if $s_i = n^p_i$ then the path $N^i_x$ and the shortest path from $n^x_i$ to $h^n_i$ (i.e., the subpath of $H^i$) must belong to any isometric hull of $S$. 

Moreover, for any $1\leq i \leq n_y$, any isometric hull of $S$ must contain either $P^i_y$ or $N^i_y$. 

Let us consider any isometric hull $H$ of $S$ and, for any $1\leq i \leq n_y$, let $L_i \in \{P^i_y,N^i_y\}$ be a path contained in $H$.

Consider the assignment $Y^*$ of $Y$ defined by $H$ as follows: if $L_i=P^i_y$  then variable $y_i$ is set to true, and it is set to False otherwise (i.e., if $L_i=N^i_y$). Since the formula is true for any assignment of $Y$, then $\Phi(X^*,Y^*)$ is true. In particular, there is a clause $C_j$ satisfied by all its variables. By definition of $X^*,Y^*$ and $H$, this implies that all its three corners belong to $H$ and, as in the proof of Lemma~\ref{lem:minIsomHull}, this implies that the entire clause-gadget $C^j$ is in $H$. Therefore, using vertex $q$ as in proof of Lemma~\ref{lem2:minIsomHull}, this implies that all vertices $p^i_x, n^i_x$ for $1\leq i \leq n_x$ compared and all vertices $p^i_y, n^i_y$ for $1\leq i \leq n_y$ belong to $H$. From there, it is easy to conclude that all vertices of $G$ belong to $H$. Therefore, $G$ is the unique isometric hull of $S$ and $S$ is a hull set of the desired size.
\item To conclude, we prove that, if for any assignment $X^*$ of $X$ there exists an assignment $Y^*$ of $Y$ such that $\Phi(X^*,Y^*)$ is False, then no set of at most $3n_x+2n_y+1$ vertices is a hull set of $G$. 

Let $S$ be a set of at most $3n_x+2n_y+1$ vertices. As already said, to be a hull set, $S$ must be equal to $I \cup \{s_1,\cdots,s_{n_x}\}$ where, for any $1\leq i \leq n_x$, vertex $s_i$ is an internal vertex of the path $H^i$. 

Let $X^*$ be the assignment of $X$ defined as follows: for any $1\leq i \leq n_x$, variable $x_i$ is set to True if $s_i$ is closer to $h^p_i$ and $x_i$ is set to False otherwise.

By assumption, there is an assignment $Y^*$ of $Y$ such that $\Phi(X^*,Y^*)$ is False.

Let $H$ be the subgraph of $G$ built as follows. First, $H$ contains $S$ and all paths $P(r,d_i^x)$ and $P(r,g^i_x)$ for $1\leq i \leq n_x$ and $H$ contains all paths $P(r,d_i^y)$ and $P(r,g^i_y)$ for $1\leq i \leq n_y$. For any $1\leq i \leq n_x$, $H$ contains $P^i_x$ and the shortest path between $s_i$ and $p^i_x$ if $x_i$ is assigned to True, and $H$ contains $N^i_x$ and the shortest path between $s_i$ and $n^i_x$ if $x_i$ is assigned to  False. For any $1\leq i \leq n_y$, $H$ contains $P^i_y$  if $y_i$ is assigned to True, and $H$ contains $N^i_y$ if $y_i$ is assigned to False.

As in the proof of Lemma~\ref{lem:minIsomHull}, because no clause is satisfied by $X^*\cup Y^*$, it can be proved that each clause-gadget has at most two corners in the current graph $H$. 

Finally, for any clause-gadget $C^j$ that has exactly to corners in $H$, add to $H$ the shortest path (in $C^j$) between these two corners. 

Similar arguments as those in the proof of Lemma~\ref{lem:minIsomHull} give that $H$ is a proper isometric subgraph of $G$ and contains $S$. Therefore, $S$ is not a hull set of $G$. 
\end{itemize}
\end{proof}

\section{Further work}
We have devised a polytime algorithm for MGS when all images of a pseudo-closure are given as an input. While pseudo-closures generalize closures, they do not capture other generalizations from the literature such as preclosures~\cite{Ark-90} (since they are not idempotent) or closure functions of greedoids~\cite{Bjo-92} (since they are extensive). Can similar algorithms be provided for these classes?

An open problem with respect to closures is Question~\ref{quest:graphiclattices}, i.e., find a characterization of those closures coming from the convex subgraphs of a graph. The corresponding question for (finite) metric spaces is also open, see~\cite{Her-94}.
Moreover, we wonder about the complexity of \texttt{LOGHULL-NUMBER}, even for partial cubes. A particular question arising in this context is, whether \texttt{HULL-NUMBER} admits an FPT algorithm parametrized by solution size $k$.
Finally, we would like to recall Question~\ref{quest:isohullcube}, i.e., is there a subset $X$ of the hypercube $Q_d$ such that a smallest partial cube in $Q_d$ containing $X$ is not of polynomial size in $d+|X|$?

\bibliographystyle{plain}
\bibliography{latthull}

\begin{thebibliography}{10}

\bibitem{Alb-16}
M.~{Albenque} and K.~{Knauer}.
\newblock {Convexity in partial cubes: the hull number.}
\newblock {\em {Discrete Math.}}, 339(2):866--876, 2016.

\bibitem{Ara-13}
J.~Araujo, V.~Campos, F.~Giroire, N.~Nisse, L.~Sampaio, and R.~Soares.
\newblock On the hull number of some graph classes.
\newblock {\em Theoret. Comput. Sci.}, 475:1--12, 2013.

\bibitem{Ara-13b}
J.~Ara{\'{u}}jo, G.~Morel, L.~Sampaio, R.~P. Soares, and V.~Weber.
\newblock Hull number: P\({}_{\mbox{5}}\)-free graphs and reduction rules.
\newblock {\em Discrete Applied Mathematics}, 210:171--175, 2016.

\bibitem{Ark-90}
A.V. {Arkhangel'skij}, L.S. {Pontryagin}, and R.V. {Gamkrelidze}, editors.
\newblock {\em {General topology. I. Basic concepts and constructions.
  Dimension theory.}}
\newblock Berlin etc.: Springer-Verlag, 1990.

\bibitem{Bjo-92}
Anders {Bj\"orner} and G\"unter~M. {Ziegler}.
\newblock {Introduction to greedoids.}
\newblock In {\em {Matroid applications}}, pages 284--357. Cambridge: Cambridge
  University Press, 1992.

\bibitem{Cac-10}
J.~C{\'a}ceres, C.~Hernando, M.~Mora, I.~M. Pelayo, and M.~L. Puertas.
\newblock On the geodetic and the hull numbers in strong product graphs.
\newblock {\em Comput. Math. Appl.}, 60(11):3020--3031, 2010.

\bibitem{Can-06}
S.~R. Canoy, Jr., G.~B. Cagaanan, and S.~V. Gervacio.
\newblock Convexity, geodetic, and hull numbers of the join of graphs.
\newblock {\em Util. Math.}, 71:143--159, 2006.

\bibitem{Cen-13}
C.~C. Centeno, L.~D. Penso, D.~Rautenbach, and V.~G. Pereira~de S{\'a}.
\newblock Geodetic {N}umber versus {H}ull {N}umber in {$P_3$}-{C}onvexity.
\newblock {\em SIAM J. Discrete Math.}, 27(2):717--731, 2013.

\bibitem{Cha-00}
G.~Chartrand, F.~Harary, and P.~Zhang.
\newblock On the hull number of a graph.
\newblock {\em Ars Combin.}, 57:129--138, 2000.

\bibitem{Din-14}
I.~{Dinur} and D.~{Steurer}.
\newblock {Analytical approach to parallel repetition.}
\newblock In {\em {46th annual ACM symposium on theory of computing ({STOC}
  '14)}}, pages 624--633. {ACM}, 2014.

\bibitem{Dou-09}
M.~C. Dourado, J.~G. Gimbel, J.~Kratochv{\'{\i}}l, F.~Protti, and J.~L.
  Szwarcfiter.
\newblock On the computation of the hull number of a graph.
\newblock {\em Disc. Maths.}, 309(18):5668--5674, 2009.

\bibitem{Dou-10}
M.~C. Dourado, F.~Protti, D.~Rautenbach, and J.~L. Szwarcfiter.
\newblock On the hull number of triangle-free graphs.
\newblock {\em SIAM J. Discrete Math.}, 23:2163--2172, 2009/10.

\bibitem{Eve-85}
M.~G. Everett and S.~B. Seidman.
\newblock The hull number of a graph.
\newblock {\em Discrete Math.}, 57(3):217--223, 1985.

\bibitem{Far-86}
Martin {Farber} and Robert~E. {Jamison}.
\newblock {Convexity in graphs and hypergraphs.}
\newblock {\em {SIAM J. Algebraic Discrete Methods}}, 7:433--444, 1986.

\bibitem{Flu-06}
J.~Flum and M.~Grohe.
\newblock {\em Parameterized Complexity Theory (Texts in Theoretical Computer
  Science. An {EATCS} Series)}.
\newblock Springer-Verlag, 2006.

\bibitem{Her-05}
C.~Hernando, T.~Jiang, M.~Mora, I.~M. Pelayo, and C.~Seara.
\newblock On the {S}teiner, geodetic and hull numbers of graphs.
\newblock {\em Discrete Math.}, 293(1-3):139--154, 2005.

\bibitem{Her-94}
Eike {Hertel}.
\newblock {Convexity in finite metric spaces.}
\newblock {\em {Geom. Dedicata}}, 52(3):215--220, 1994.

\bibitem{Kam-05}
K.~{Nehm\'e}, P.~{Valtchev}, M.~H. {Rouane}, and R.~{Godin}.
\newblock {On computing the minimal generator family for concept lattices and
  icebergs.}
\newblock In {\em {$3^{rd}$ int. conf. on Formal concept analysis
  ({ICFCA'05})}}, pages 192--207. Berlin: Springer, 2005.

\bibitem{Pap-96}
C.~H. {Papadimitriou} and M.~{Yannakakis}.
\newblock {On limited nondeterminism and the complexity of the V-C dimension.}
\newblock {\em {J. Comput. Syst. Sci.}}, 53(2):161--170, 1996.

\bibitem{She-96}
L.N. {Shevrin} and A.Ya. {Ovsyannikov}.
\newblock {\em {Semigroups and their subsemigroup lattices.}}
\newblock Dordrecht: Kluwer Academic Publishers, 1996.

\end{thebibliography}

\end{document}